\numberwithin{equation}{section}
\numberwithin{figure}{section}
\theoremstyle{definition}
  \newtheorem{defn}{\protect\definitionname}
  \newtheorem{defn}{\protect\definitionname}[chapter]
\theoremstyle{plain}
  \newtheorem{thm}{\protect\theoremname}
  \newtheorem{thm}{\protect\theoremname}[chapter]
\theoremstyle{remark}
  \newtheorem{rem}{\protect\remarkname}
  \newtheorem{rem}{\protect\remarkname}[chapter]
\theoremstyle{plain}
  \newtheorem{prop}{\protect\propositionname}
  \newtheorem{prop}{\protect\propositionname}[chapter]
\theoremstyle{plain}
  \newtheorem{lem}{\protect\lemmaname}
  \newtheorem{lem}{\protect\lemmaname}[chapter]
\DeclareMathOperator {\esssup}{ess \ sup}
\date{}
\def\clearpage{%
  \ifvmode
    \ifnum \@dbltopnum =\m@ne
      \ifdim \pagetotal <\topskip
        \hbox{}
      \fi
    \fi
  \fi
  \newpage
  \thispagestyle{empty}
  \write\m@ne{}
  \vbox{}
  \penalty -\@Mi
}
\newcommand{\Rolosaysnopage}[1]{{ }}
\def\Xint#1{\mathchoice
{\XXint\displaystyle\textstyle{#1}}%
{\XXint\textstyle\scriptstyle{#1}}%
{\XXint\scriptstyle\scriptscriptstyle{#1}}%
{\XXint\scriptscriptstyle\scriptscriptstyle{#1}}%
\!\int}
\def\XXint#1#2#3{{\setbox0=\hbox{$#1{#2#3}{\int}$}
\vcenter{\hbox{$#2#3$}}\kern-.5\wd0}}
\def\dashint{\Xint-}
\newsavebox{\@brx}
\newcommand{\llangle}[1][]{\savebox{\@brx}{\(\m@th{#1\langle}\)}%
  \mathopen{\copy\@brx\kern-0.5\wd\@brx\usebox{\@brx}}}
\newcommand{\rrangle}[1][]{\savebox{\@brx}{\(\m@th{#1\rangle}\)}%
  \mathclose{\copy\@brx\kern-0.5\wd\@brx\usebox{\@brx}}}
\newcommand{\Cn}{\ensuremath{\mathbb{C}^n}}
\newcommand{\D}{\ensuremath{\mathscr{D}}}
\newcommand{\MC}[1]{\ensuremath{\mathcal{#1}}}
\newcommand{\V}[1]{\ensuremath{\vec{#1}}}
\newcommand{\innp}[1]{\left< #1 \right>}
\newcommand{\W}{\ensuremath{{\mathcal W}}}
\newcommand{\norm}[1]{\ensuremath{\left\|#1\right\|}}
\newcommand{\pr}[1]{\ensuremath{\left(#1\right)}}
\newcommand{\Ctwon}{\ensuremath{\mathbb{C}^{2n}}}
\providecommand{\definitionname}{Definition}
\providecommand{\lemmaname}{Lemma}
\providecommand{\propositionname}{Proposition}
\providecommand{\remarkname}{Remark}
\providecommand{\theoremname}{Theorem}
\begin{document}
\global\long\def\essssup{\text{ess sup}}

\global\long\def\esssinf{\text{ess inf}}

\title{Sharp $A_{1}$ weighted estimates for vector valued operators}

\author{Joshua Isralowitz}

\address{Department of Mathematics, University at Albany, 1400 Washington
Ave., Albany, NY 12159, USA E-mail address: }

\email{jisralowitz@albany.edu}

\author{Sandra Pott}

\address{Centre for Mathematical Sciences, University of Lund, Lund, Sweden.}

\email{sandra@maths.lth.se}

\author{Israel P. Rivera-Ríos }

\address{Universidad Nacional del Sur - Instituto de Matemática de Bahía Blanca,
Bahía Blanca, Argentina.}

\email{israel.rivera@uns.edu.ar}
\begin{abstract}
Given $1\leq q<p<\infty$, quantitative weighted $L^{p}$ estimates,
in terms of $A_{q}$ weights, for vector valued maximal functions,
Calderón-Zygmund operators, commutators and maximal rough singular
integrals are obtained. The results for singular operators will rely
upon suitable convex body domination results, which in the case of
commutators will be provided in this work, obtaining as a byproduct
a new proof for the scalar case as well.
\end{abstract}

\maketitle

\section{Introduction}

We recall that a weight, namely, a non negative locally integrable
function $w$ belongs to $A_{p}$ for $1<p<\infty$ if
\[
[w]_{A_{p}}=\sup_{Q}\frac{1}{|Q|}\int_{Q}w(x)dx\left(\frac{1}{|Q|}\int_{Q}w(y)^{-\frac{p'}{p}}dy\right)^{\frac{p}{p'}}<\infty.
\]
The $A_{p}$ class of weights characterizes the $L^{p}(w)$ boundedness
of the maximal function as B. Muckenhoupt established in the 70s.
Subsequent works of B. Muckenhoupt himself R. Wheeden, R. Hunt, R. R. Coifman
and C. Fefferman were devoted to explore the connection of the $A_{p}$ class with
weighted estimates for singular integrals. However, it was not until
the 2000s that the quantitative dependence on the so called $A_{p}$
constant, namely $[w]_{A_{p}}$, became a trending topic. Probably
the paradigmatic question in that line of research was the $A_{2}$ theorem finally
established by T. Hytönen \cite{H}.

Now we recall that the $A_{p}$ classes are increasing, so it is natural
to define $A_{\infty}=\bigcup_{p\geq1}A_{p}$. T. Hytönen and C. Pérez
\cite{HPAinfty} proved that
\[
[w]_{A_{\infty}}=\sup_Q\frac{1}{w(Q)}\int_{Q}M(w\chi_{Q})(x)dx<\infty
\]
is the smallest constant characterizing $A_{\infty}$, at least up
until now, and provided a number of quantitative estimates in terms
of $[w]_{A_{\infty}}$. Nevertheless, it is worth noting that essentially
the same constant had already appeared in works by Fujii \cite{F}
and Wilson \cite{W}. After \cite{HPAinfty} several papers have been
devoted to the study of quantitative weigthed estimates in terms of
the $A_{p}$ and the $A_{\infty}$ constants.

Among the possible extensions of the classical scalar theory of Calderón-Zygmund
operators, vector valued extensions have received an increasing degree
of attention in the last years. Let $W:\mathbb{R}^{d}\rightarrow\mathbb{C}^{n\times n}$
a matrix weight, namely, a matrix function such that $W(x)$ is positive
definite a.e. Given $f:\mathbb{R}^{d}\rightarrow \mathbb{C}^{n}$ and
$1<p<\infty$, we define
\[
\|f\|_{L^{p}(W)}=\left(\int_{\mathbb{R}^{d}}\left|W^{\frac{1}{p}}(x)f(x)\right|^{p}dx\right)^{\frac{1}{p}}.
\]
Let $1<p<\infty$. We say that a matrix weight $W$ is an $A_{p}$
weight if
\[
[W]_{A_{p}}=\sup_{Q}\frac{1}{|Q|}\int_{Q}\left(\frac{1}{|Q|}\int_{Q}\left\Vert W^{\frac{1}{p}}(x)W^{-\frac{1}{p}}(y)\right\Vert ^{p'}dy\right)^{\frac{p}{p'}}dx<\infty.
\]
Matrix $A_{p}$ weights were introduced by S. Treil and A. Volberg
in \cite{TV}. In the late 90s it was shown in a series of works by
M. Goldberg \cite{G}, F. Nazarov and S. Treil \cite{NT} and A. Volberg
\cite{V} that if $W$ is a matrix $A_{p}$ weight and $T$ is a Calderón-Zygumund
operator, then $T$ is bounded on $L^{p}(W)$. The definition of $A_p$ that we have presented here is due to S. Roudenko \cite{R} and is equivalent to the definitions in the aforementioned works.

Contrary to what happpens in the scalar setting, the $A_2$ conjecture remains an open problem in the vector valued setting. In \cite{BPW}, K. Bickel, S. Petermichl and B. Wick proved that the dependence of the norm of the martingale and Hilbert
transform on the $A_{2}$ constant of the weight $W$ is at most $[W]_{A_{2}}^{\frac{3}{2}}\log([W]_{A_{2}})$.
The second author and A. Stoica \cite{PS} established that the dependence
of the norm of all Calderón-Zygmund operators with cancellation on
$[W]_{A_{2}}$ coincides with the one for the matrix martingale transform,
hence reducing the $A_{2}$ conjecture for those operators to the
proof of the linear bound for the latter.

Given $1\leq q<\infty$, we say that $W\in A_{q,\infty}^{sc}$ if
\[
[W]_{A_{q,\infty}^{sc}}=\sup_{e\in\mathbb{C}^{n}}\left[\left|W^{\frac{1}{q}}e\right|^{q}\right]_{A_{\infty}}<\infty.
\]
Quite recently F. Nazarov, S. Petermichl, S. Treil and A. Volberg
\cite{NPTV} established the following quantitative estimate for $W\in A_{2}$,
\begin{equation}
\|T\vec{f}\|_{L^{2}(W)}\leq c_{n,d,T}[W]_{A_{2}}^{\frac{1}{2}} [W]_{A_{2,\infty}^{sc}}^\frac12 [W^{-1}]_{A_{2,\infty}^{sc}} ^\frac12\|\vec{f}\|_{L^{2}(W)}\leq c_{n,T}[W]_{A_{2}}^{\frac{3}{2}}\|f\|_{L^{2}(W)}.\label{eq:A2NPTV}
\end{equation}
The preceding estimate is obtained using the so called convex body
domination. In that work the linear dependence on the $A_{2}$ constant
is conjectured. In the case of maximal rough singular integrals with
$\Omega\in L^{\infty}(\mathbb{S}^{n-1})$, the following estimate,
in the case $p=2$, was quite recently provided by F. Di Plinio, K.
Li and T. Hytönen \cite{DiPHL},
\[
\left\Vert \sup_{\delta>0}|W^{\frac{1}{2}}T_{\Omega,\delta}\vec{f}|\right\Vert _{L^{2}(\mathbb{R}^{d})}\leq c_{n,d,T}[W]_{A_{2}}^{\frac{5}{2}}\|W^{\frac{1}{2}}\vec{f}\|_{L^{2}(\mathbb{R}^{d})}
\]
where the scalar operator $T_{\Omega,\delta}$ is defined as follows
\[
T_{\Omega,\delta}f(x)=\int_{|x-y|>\delta}\frac{\Omega\left(\frac{x-y}{|x-y|}\right)}{|x-y|^{d}}f(y)dy.
\]
Very recently D. Cruz-Uribe, J. Isralowitz and K. Moen \cite{CUIM}
extended \eqref{eq:A2NPTV} to every $1<p<\infty$, providing the
following estimate
\[
\|T\vec{f}\|_{L^{p}(W)}\leq c_{n,d,T}\left[W\right]_{A_{p}}\left[W^{-\frac{p'}{p}}\right]_{A_{p',\infty}^{sc}}^{\frac{1}{p}}\left[W\right]_{A_{p,\infty}^{sc}}^{\frac{1}{p'}}\|f\|_{L^{p}(W)}\leq c_{n,d,T}\left[W\right]_{A_{p}}^{1+\frac{1}{p-1}-\frac{1}{p}}\|\V{f}\|_{L^{p}(W)}.
\]

Some sharp estimates have been obtained as well in the vector valued
setting. T. Hytönen, S. Petermichl and A. Volberg \cite{HPV}, and
Isralowitz, Kwon, and the first author \cite{IKP} established the
linear upper bound on $[W]_{A_{2}}$ for the matrix-weighted square
function and the matrix-weighted maximal function, respectively (namely, $M_{W,p}$ defined as in Section \ref{sec:MR}).

We recall that given a linear operator $G$ and a locally integrable
function $b$, the commutator $[b,G]$ is defined by
\[
[b,G]f(x)=b(x)Gf(x)-G(bf)(x).
\]

At this point we turn our attention back to the scalar setting. A.
Lerner, S. Ombrosi and C. Pérez \cite{LOP2,LOP} established the following
result for Calderón-Zygmund operators. Given a Calderón-Zygmund operator
$T$ and $w\in A_{1}$ we have that
\begin{equation}
\|Tf\|_{L^{p}(w)}\leq c_{n,T}pp'[w]_{A_{1}}^{\frac{1}{p}}[w]_{A_{\infty}}^{\frac{1}{p'}}\|f\|_{L^{p}(w)}.\label{eq:A1escalar}
\end{equation}
In the case of commutators, for $b\in BMO$ and $T$ a Calderón-Zygmund
operator, C. Ortiz-Caraballo \cite{OC} proved that
\begin{equation}
\|[b,T]f\|_{L^{p}(w)}\leq c_{n,T}\|b\|_{BMO}\left(pp'\right)^{2}[w]_{A_{1}}^{\frac{1}{p}}[w]_{A_{\infty}}^{1+\frac{1}{p'}}\|f\|_{L^{p}(w)}.\label{eq:A1ScalarComm}
\end{equation}
One of the motivations to obtain such a precise estimate for Calderón-Zygmund
operators was to provide a proof of the $A_{2}$ constant. Assume
that for every $w\in A_{1}$
\[
\|Tf\|_{L^{1,\infty}(w)}\leq c\varphi([w]_{A_{1}})\|f\|_{L^{1}(w)}.
\]
Then we also have that for every $1<p<\infty$ and every $w\in A_{p}$
(\cite{LOP2})
\[
\|Tf\|_{L^{p,\infty}(w)}\leq c\varphi([w]_{A_{p}})\|f\|_{L^{p}(w)}.
\]
We observe that in \cite{LOP2} it was proved that $\varphi(t)\leq t\log(e+t)$
using \eqref{eq:A1escalar} as a main ingredient and it was also conjectured
that $\varphi(t)\simeq t$. Being true the latter would have led to
a proof of the $A_{2}$ conjecture, since in \cite{PTV} it was established
that
\[
\|T\|_{L^{2}(w)}\leq c_{n,T}[w]_{A_{2}}+c_{n,T}\left(\|T\|_{L^{2}(w)\rightarrow L^{2,\infty}(w)}+\|T\|_{L^{2}(w^{-1})\rightarrow L^{2,\infty}(w^{-1})}\right).
\]
However, the fact that $\varphi(t)\simeq t$ was disproved in \cite{NRVV},
furthermore, in \cite{LNO} it was established that $\varphi(t)\simeq t\log(e+t)$,
and consequently the estimate in \cite{LOP2} is sharp.

\section{Main Results}\label{sec:MR}

One of the main purposes of this paper is to provide vector valued
counterparts of \eqref{eq:A1escalar} and \eqref{eq:A1ScalarComm}.
To provide that kind of estimates we rely upon the definition of the
matrix $A_{1}$ class  that M. Frazier and S. Roudenko introduced
in \cite{FR}.
\begin{defn}
We say that a weight $W\in A_{1}$ if
\[
[W]_{A_{1}}=\sup_{Q}\esssup_{y\in Q}\frac{1}{|Q|}\int_{Q}\|W(x)W^{-1}(y)\|dx<\infty
\]
\end{defn}
Before presenting our first result we would like to discuss briefly
the definition of the maximal function. Due to the non-linearity of
the maximal function, when it comes to study weighted estimates for
it, the approach that has been mainly considered in the literature
is to study weighted variants of it (see \cite{G,IKP}). In what
follows we will deal with the following weighted maximal
functions.
\[
M_{W,p}(\vec{f})(x)=\sup_{x\in Q}\frac{1}{|Q|}\int_{Q}\left|W^{\frac{1}{p}}(x)W^{-\frac{1}{p}}(y)\vec{f}(y)\right|dy
\]
\[
M'_{W,p}(\vec{f})(x)=\sup_{x\in Q}\frac{1}{|Q|}\int_{Q}\left|\mathcal{W}_{Q,p}W^{-\frac{1}{p}}(y)\vec{f}(y)\right|dy
\]
We remit the reader to Section \ref{sec:Apweights} for the definition
of $\mathcal{W}_{Q}$.
\begin{thm}
\label{Thm:A1Mat} Let $W\in A_{1}$ and $1<p<\infty$. Then
\begin{eqnarray}
\|M_{W,p}\|_{L^{p}(\mathbb{R}^{d};\mathbb{C}^{n})\rightarrow L^{p}(\mathbb{R}^{d})} & \leq & c_{n,p}[W]_{A_{1}}^{\frac{1}{p}}\label{eq:A1MW}\\
\|M'_{W,p}\|_{L^{p}(\mathbb{R}^{d};\mathbb{C}^{n})\rightarrow L^{p}(\mathbb{R}^{d})} & \leq & c_{n,p}[W]_{A_{1}}^{\frac{1}{p}}\label{eq:A1MprimaW}
\end{eqnarray}
Let $T$ a Calderón-Zygmund operator, $b\in BMO$ and $\Omega\in L^{\infty}(\mathbb{S}^{d-1})$
with $\int_{\mathbb{S}^{d-1}}\Omega=0$, if $W\in A_{1}$ and $1<p<\infty$,
\begin{eqnarray}
 &  & \|T\|_{L^{p}(W)\rightarrow L^{p}(W)}\leq c_{n,p,T}[W]_{A_{1}}^{\frac{1}{p}}[W]_{A_{1,\infty}^{sc}}^{\frac{1}{p'}}\leq c_{n,p,T}[W]_{A_{1}}\label{eq:A1Mat}\\
 &  & \left\Vert T_{\Omega,W,p}^{*}\right\Vert _{L^{p}(\mathbb{R}^{d}; \mathbb{C}^{n})\rightarrow L^{p}(\mathbb{R}^{d})}\leq c_{n,d,\Omega,p}[W]_{A_{1}}^{\frac{1}{p}}[W]_{A_{1,\infty}^{sc}}^{1+\frac{1}{p'}}\leq c_{n,d,\Omega,p}[W]_{A_{1}}^{2}\label{eq:A1Rough}\\
 &  & \|[b,T]\|_{L^{p}(W)\rightarrow L^{p}(W)}\leq c_{n,p,T}\|b\|_{BMO}[W]_{A_{1}}^{\frac{1}{p}}[W]_{A_{1,\infty}^{sc}}^{1+\frac{1}{p'}}\leq c_{n,p,T}[W]_{A_{1}}^{2}\label{eq:A1MatComm}
\end{eqnarray}
where $T_{\Omega,W,p}^{*}f=\sup_{\delta>0}\left|W^{\frac{1}{p}}T_{\Omega,\delta}\left(W^{-\frac{1}{p}}\vec{f}\right)\right|$.
\end{thm}

Coming back once again to the scalar setting, it is a known fact that
an extrapolation argument \cite[Corollary 4.3]{D} allows to prove
that if we have that
\[
\|Gf\|_{L^{p}(w)}\leq c_{T,p,q}\varphi([w]_{A_{1}})\|f\|_{L^{p}(w)},
\]
for every $A_{1}$ weight, then the same dependence holds as well
for every $w\in A_{q}$ with $1\leq q<p$, namely,
\[
\|Gf\|_{L^{p}(w)}\leq c_{T,p,q}\varphi([w]_{A_{q}})\|f\|_{L^{p}(w)},
\]

Extrapolation arguments, in case of being feasible, have not been
developed yet in this setting so we provide a direct proof of the
preceding result in the cases considered in Theorem \ref{Thm:A1Mat}.
We observe that we recover again the linear dependence already available
in the scalar case. We wonder whether it is possible to provide some
estimate analogous to the one supremmum estimates obtained in \cite{Li}
and \cite{RRio}.
\begin{thm}
\label{Thm:AqMat}Let $1<q<p<\infty$ and $W\in A_{q}$. Then
\begin{eqnarray}
\|M_{W,p}\|_{L^{p}(\mathbb{R}^{d};\mathbb{C}^{n})\rightarrow L^{p}(\mathbb{R}^{d})} & \leq & c_{n,p,T}[W]_{A_{q}}^{^{\frac{1}{p}}}\label{eq:AqMW}\\
\|M'_{W,p}\|_{ L^{p}(\mathbb{R}^{d};\mathbb{C}^{n})\rightarrow L^{p}(\mathbb{R}^{d})} & \leq & c_{n,p,T}[W]_{A_{q}}^{\frac{1}{p}}\label{eq:AqMprimaW}
\end{eqnarray}
Let $T$ a Calderón-Zygmund operator, $b\in BMO$ and $\Omega\in L^{\infty}(\mathbb{S}^{d-1})$
with $\int_{\mathbb{S}^{d-1}}\Omega=0$, if $W\in A_{1}$ and $1<p<\infty$,
\begin{eqnarray}
 &  & \|T\|_{L^{p}(W)\rightarrow L^{p}(W)}\leq c_{n,p,q,T}[W]_{A_{q}}^{\frac{1}{p}}[W]_{A_{q,\infty}^{sc}}^{\frac{1}{p'}}\leq c_{n,p,T}[W]_{A_{q}}\label{eq:AqMat}\\
 &  & \left\Vert T_{\Omega,W,p}^{*}\right\Vert _{L^{p}(\mathbb{R}^{d};\mathbb{C}^{n})\rightarrow L^{p}(\mathbb{R}^{d})}\leq c_{n,d,\Omega,p}[W]_{A_{q}}^{\frac{1}{p}}[W]_{A_{q,\infty}^{sc}}^{1+\frac{1}{p'}}\leq c_{n,d,\Omega,p}[W]_{A_{q}}^{2}\label{eq:AqRough}\\
 &  & \|[b,T]\|_{L^{p}(W)\rightarrow L^{p}(W)}\leq c_{n,p,q,T}\|b\|_{BMO}[W]_{A_{q}}^{\frac{1}{p}}[W]_{A_{q,\infty}^{sc}}^{1+\frac{1}{p'}}\leq c_{n,p,q,T}\|b\|_{BMO}[W]_{A_{q}}^{2}\label{eq:AqMatComm}
\end{eqnarray}
where $T_{\Omega,W,p}^{*}f=\sup_{\delta>0}\left|W^{\frac{1}{p}}T_{\Omega,\delta}\left(W^{-\frac{1}{p}}f\right)\right|$.
\end{thm}
We would like to note that both in Theorems \ref{Thm:A1Mat} and \ref{Thm:AqMat}
the dependences obtained are the same as the best known ones in the
scalar case, and therefore, besides the case of the maximal rough singular integral, sharp.

The rest of the paper is organized as follows. In Section \ref{sec:CBD}
we present a convex body domination result for commutators. We provide
some extra facts about matrix $A_{p}$ weights in Section \ref{sec:Apweights}.
Finally, in Section \ref{sec:Proofs of A1Estimates} we settle Theorems
\ref{Thm:A1Mat} and \ref{Thm:AqMat}.

\section{Convex body domination for Commutators \label{sec:CBD}}

We begin the section borrowing some definitions from \cite{LN}. We
say that a family of cubes $\mathcal{D}$ is a dyadic lattice if it
satisfies the following properties
\begin{enumerate}
\item If $Q\in\mathcal{D}$ every dyadic child of $Q$ belongs to $\mathcal{D}$.
In other words, if $\mathcal{D}(Q)$ is the standard grid of dyadic
cubes of $Q$ and $Q\in\mathcal{D}$ then $\mathcal{D}(Q)\subseteq\mathcal{D}$.
\item If $Q_{1},Q_{2}\in\mathcal{D}$ there exists a common ancestor in
$\mathcal{D}$ that is there exists $Q\in\mathcal{D}$ such that $Q_{1},Q_{2}\in\mathcal{D}(Q)$.
\item For every compact set $K\subseteq\mathbb{R}^{d}$ there exists $Q\in\mathcal{D}$
such that $K\subseteq Q$.
\end{enumerate}
Given $\eta\in(0,1)$ we say that $\mathcal{S}\subset\D$
is a $\eta$-sparse family if for every $Q\in\mathcal{S}$ there exists
a measurable subset $E_{Q}\subset Q$ such that
\begin{enumerate}
\item $\eta|Q|\leq|E_{Q}|.$
\item The sets $E_{Q}$ are pairwise disjoint.
\end{enumerate}
Further, given $\Lambda > 1$ we say that $\mathcal{S} \subset \D$
is a $\Lambda$ Carleson family if for every $Q \in \MC{S}$,
$$\sum_{P \in \MC{S}, P \subseteq Q} |P| \leq \Lambda |Q|.$$
Clearly every $\eta$-sparse family is $\eta^{-1}$ Carleson,
since
$$\sum_{P \in \MC{S}, P \subseteq Q} |P| \leq \eta^{-1} \sum_{P \in
  \MC{S}, P \subseteq Q} |E_P| \leq \Lambda^{-1} |Q|.$$
Though less obvious, the converse is true. Every $\Lambda$ Carleson
family is $\Lambda^{-1}$ sparse~\cite[Lemma 6.3]{LN}.  We will also
use without further comment the fact that every $\Lambda$ Carleson
family can be written as a union of $m$ Carleson families, each of
which is $1 + \frac{\Lambda-1}{m}$ Carleson~\cite[Lemma 6.6]{LN}.
Hereafter we will sometimes refer to a family as sparse or Carleson
without reference to $\eta$ or $\Lambda$  if the
specific values of these constants are unimportant.

Convex body domination was introduced by F. Nazarov, S. Petermichl
and A. Volberg in \cite{NPTV}. That notion provides a suitable counterpart
to sparse domination in the vector-valued setting. Let $f:\mathbb{R}^{d}\longrightarrow\mathbb{C}^{n}$.
Given a cube $Q$ if additionally $f\in L^{r}(Q)$ where $1\leq r<\infty$
and $r'=\infty$ if $r=1$, we define
\[
\langle\langle\vec{f}\rangle\rangle_{r,Q}=\left\{ \frac{1}{|Q|}\int_{Q}f\varphi dx\,:\,\varphi:Q\rightarrow\mathbb{R},\,\varphi\in B_{L^{r'}}(Q)\right\}
\]
where $B_{L^{r'}(Q)}=\left\{ \phi\in L^{r'}(Q)\,:\,\|\phi\|_{L^{r'}}\leq1\right\} $.
We will drop the subscript $r$ in the case $r=1$. In \cite{NPTV}
it was established that $\langle\langle f\rangle\rangle_{Q}$ is a
symetric, convex and compact set in $\mathbb{C}^{n}$ and in \cite{DiPHL}
that property was extended to the case $r>1$.

We recall that given $T$ a linear operator, the grand-maximal operator
$M_{T}$ was defined for first as follows in \cite{Le}
\[
M_{T}f(x)=\sup_{Q\ni x}\essssup_{y\in Q}|T(f\chi_{
{ \mathbb{R}^{d}}\setminus3Q})(y)|.
\]
In \cite{NPTV} the authors proved the following result (see also
\cite{HNotes}).
\begin{thm}
\label{Thm:ConvexBody}Let $T:L^{1}(\mathbb{R}^{d})\rightarrow L^{1,\infty}(\mathbb{R}^{d})$
be a linear operator such that also $M_{T}:L^{1}(\mathbb{R}^{d})\rightarrow L^{1,\infty}(\mathbb{R}^{d})$.
For $\vec{f}\in L_{c}^{\infty}(\mathbb{R}^{d};{\mathbb{C}^{n}})$ and
$\varepsilon\in(0,1)$ there exists a $(1-\varepsilon)$-sparse collection
of dyadic cubes such that
\[
T\vec{f}(x)\in\frac{c_{d,n}c_{T}}{\varepsilon}\sum_{j=1}^{3^{n}}\sum_{Q\in\mathcal{S}_{j}}\langle\langle\vec{f}\rangle\rangle_{Q}\chi_{Q}(x)
\]
where $c_{T}=\|T\|_{L^{1}\rightarrow L^{1,\infty}}+\|M_{T}\|_{L^{1}\rightarrow L^{1,\infty}}$.
 More precisely, there exist functions $k_{Q}\in B_{L^{\infty}(Q\times Q)}$
such that
\begin{equation}
T\vec{f}(x)=\frac{c_{d,n}c_{T}}{\varepsilon}\sum_{j=1}^{{  3^{d}}}\sum_{Q\in\mathcal{S}_{j}}\left(\frac{1}{|Q|}\int_{Q}k_{Q}(x,y)\vec{f}(y)dy\right)\chi_{Q}(x).\label{eq:Sparse}
\end{equation}
\end{thm}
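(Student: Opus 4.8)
The plan is to adapt to the convex-body setting A.~Lerner's stopping-time proof of sparse domination for Calder\'on--Zygmund operators, the role of the kernel smoothness being taken over entirely by the grand maximal operator $M_T$, which furnishes oscillation control through essential suprema over cubes. Write $\langle g\rangle_E=\frac1{|E|}\int_E g$. By the three-lattice trick it is enough to fix one shifted dyadic lattice $\mathcal D$ and a top cube $Q_0\in\mathcal D$ with $\operatorname{supp}\vec f\subseteq Q_0$ (legitimate after rescaling and partitioning $\mathbb R^d$, since $\vec f\in L^\infty_c$) and to exhibit a $(1-\varepsilon)$-sparse family $\mathcal S\subseteq\mathcal D(Q_0)$ with
$$\chi_{Q_0}(x)\,T\vec f(x)\in\frac{c_{d,n}c_T}{\varepsilon}\sum_{Q\in\mathcal S}\langle\langle\vec f\rangle\rangle_{3Q}\,\chi_Q(x)\qquad\text{for a.e.\ }x;$$
the three-lattice correspondence then replaces each $\langle\langle\vec f\rangle\rangle_{3Q}$ by $c_d\langle\langle\vec f\rangle\rangle_P$ for a cube $P$, in one of $3^d$ lattices, with $3Q\subseteq P$ and $|P|\le c_d|Q|$, yielding the stated form.

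I build $\mathcal S$ recursively. Put $\mathcal S_0=\{Q_0\}$; given $Q\in\mathcal S_k$, let
$$\Omega_Q=\Big\{x\in Q: M(\vec f\chi_{3Q})(x)>\tfrac{A c_T}{\varepsilon}\langle|\vec f|\rangle_{3Q}\Big\}\cup\Big\{x\in Q: M_T(\vec f\chi_{3Q})(x)>\tfrac{A c_T}{\varepsilon}\langle|\vec f|\rangle_{3Q}\Big\}.$$
By the weak type $(1,1)$ bound for $M$ and the hypothesis on $M_T$ we have $|\Omega_Q|\le\tfrac{c_d}{A}\varepsilon|Q|\le\tfrac{\varepsilon}{2}|Q|$ for $A=A(d)$ large; let $\mathrm{ch}(Q)$ be the maximal dyadic cubes $Q'\subsetneq Q$ with $|Q'\cap\Omega_Q|>\tfrac12|Q'|$, set $\mathcal S_{k+1}=\bigcup_{Q\in\mathcal S_k}\mathrm{ch}(Q)$ and $\mathcal S=\bigcup_k\mathcal S_k$. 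These children are pairwise disjoint, cover $\Omega_Q$ up to a null set, and $\sum_{Q'\in\mathrm{ch}(Q)}|Q'|\le2|\Omega_Q|\le\varepsilon|Q|$, so $\mathcal S$ is $(1-\varepsilon)$-sparse; since the $k$-th generation has total measure $\le\varepsilon^k|Q_0|$, for a.e.\ $x$ only finitely many cubes of $\mathcal S$ contain $x$.

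Fix such an $x$ and let $Q_0=R_0\supsetneq\cdots\supsetneq R_N\ni x$ be the cubes of $\mathcal S$ through $x$; because $x$ lies in no stopping child of $R_N$, one has $x\notin\Omega_{R_N}$ for a.e.\ such $x$. Since $3R_{j+1}\subseteq 3R_j$ for nested dyadic cubes, telescoping gives
$$T\vec f(x)=T(\vec f\chi_{3R_N})(x)+\sum_{j=0}^{N-1}T\big(\vec f\chi_{3R_j}\chi_{\mathbb R^d\setminus 3R_{j+1}}\big)(x).$$
The residual term obeys $|T(\vec f\chi_{3R_N})(x)|\le M_T(\vec f\chi_{3R_N})(x)\le\tfrac{A c_T}{\varepsilon}\langle|\vec f|\rangle_{3R_N}$ for a.e.\ such $x$, the first inequality being the a.e.\ pointwise bound obtained by letting a cube shrink to $x$ in the definition of $M_T$, the second following from $x\notin\Omega_{R_N}$. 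For the $j$-th summand one bounds it, for a.e.\ $x\in R_{j+1}$, by $\esssup_{z\in R_{j+1}}|T(\vec f\chi_{3R_j}\chi_{(3R_{j+1})^c})(z)|$, hence — by the definition of $M_T$ — by $M_T(\vec f\chi_{3R_j})(y_0)$ for any $y_0$ in the dyadic parent of $R_{j+1}$; taking $y_0\notin\Omega_{R_j}$ (possible, that parent having $\Omega_{R_j}$-density $\le\tfrac12$) and absorbing the annular discrepancy between $3R_{j+1}$ and three times that parent into the auxiliary lattices, the $j$-th summand is $\le\tfrac{c_{d,n}c_T}{\varepsilon}\langle|\vec f|\rangle_{3R_j}$; the same $y_0$ also gives $\langle|\vec f|\rangle_{3R_{j+1}}\le\tfrac{2^d A c_T}{\varepsilon}\langle|\vec f|\rangle_{3R_j}$, keeping the chain of averages under control. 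Grouping by cube, $T\vec f(x)=\sum_{Q\in\mathcal S,\ x\in Q}v_Q(x)$ with $|v_Q(x)|\le\tfrac{c_{d,n}c_T}{\varepsilon}\langle|\vec f|\rangle_{3Q}$ a.e. I expect this step — matching the dilated truncations $3Q$ with the averaging cubes $Q$ and absorbing the annular errors, which is precisely what forces the $3^d$ families — to be the main obstacle; it is here that one genuinely needs $M_T$ rather than $T$ itself, since under the bare $L^1\to L^{1,\infty}$ hypothesis only essential suprema over cubes, not pointwise values of $T$, are at one's disposal.

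It remains to pass from this scalar estimate to the convex-body statement. As $T$ acts componentwise, for every unit $e$ the $e$-component of $v_Q(x)$ has the form $T\big(\langle\vec f,e\rangle\,\chi_{3Q}\chi_{\cdots}\big)(x)$, a scalar quantity to which the estimate above applies with $\langle|\vec f|\rangle_{3Q}$ replaced by $\frac1{|3Q|}\int_{3Q}|\langle\vec f,e\rangle|$, that is, by the value of the support function of $\langle\langle\vec f\rangle\rangle_{3Q}$ at $e$; since a symmetric compact convex body is the intersection of the slabs determined by its support function, this is exactly the assertion $v_Q(x)\in\tfrac{c_{d,n}c_T}{\varepsilon}\langle\langle\vec f\rangle\rangle_{3Q}$, and the membership claimed in the theorem follows through the three-lattice reduction. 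For the refined form with kernels $k_Q\in B_{L^\infty(Q\times Q)}$, one notes that $v_Q$ is a measurable map on $Q$ into the fixed compact convex set $\tfrac{c_{d,n}c_T}{\varepsilon}\langle\langle\vec f\rangle\rangle_{3Q}$ and applies the Kuratowski--Ryll-Nardzewski selection theorem to the set-valued map $x\mapsto\{\varphi\in B_{L^\infty(3Q)}:\frac1{|3Q|}\int_{3Q}\vec f\varphi=\tfrac{\varepsilon}{c_{d,n}c_T}v_Q(x)\}$, which has nonempty weak-$\ast$ closed values with measurable graph inside the compact metrizable ball $B_{L^\infty(3Q)}$; this produces the jointly measurable $k_Q$, and the three-lattice correspondence transfers everything back to the cubes $Q$, completing the proof.
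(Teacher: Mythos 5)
First, note that the paper does not prove Theorem \ref{Thm:ConvexBody}: it is quoted from \cite{NPTV} (see also \cite{Le}, \cite{HNotes}), so your attempt must be measured against the argument there, whose skeleton you correctly reproduce (Lerner-type stopping time driven by the weak $(1,1)$ bounds for $M$ and $M_T$, telescoping along the chain of stopping cubes, a three-lattice reduction, and a measurable selection to produce the $k_Q$). The step you yourself flag as ``the main obstacle'' is, as written, a genuine gap: having selected the children as the maximal $Q'$ with $|Q'\cap\Omega_Q|>\frac12|Q'|$, you cannot find $y_0\in R_{j+1}\setminus\Omega_{R_j}$, and retreating to the parent $\hat R_{j+1}$ leaves the annular piece $T\bigl(\vec f\chi_{3\hat R_{j+1}\setminus 3R_{j+1}}\bigr)$ uncontrolled; this is a genuine contribution of the operator near $R_{j+1}$ and cannot be ``absorbed into the auxiliary lattices'', which only re-index averaging cubes and give no bound on values of $T$. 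The standard fix is to lower the density threshold: take the children to be the maximal $Q'$ with $|Q'\cap\Omega_Q|>2^{-d-1}|Q'|$. Maximality of $Q'$ forces its parent to have $\Omega_Q$-density at most $2^{-d-1}$, hence $|R_{j+1}\cap\Omega_{R_j}|\le\frac12|R_{j+1}|$, so $R_{j+1}$ itself contains a point $y_0\notin\Omega_{R_j}$; the cube $R_{j+1}\ni y_0$ in the definition of $M_T$ then gives exactly $\esssup_{y\in R_{j+1}}|T(\vec f\chi_{3R_j}\chi_{(3R_{j+1})^c})(y)|\le M_T(\vec f\chi_{3R_j})(y_0)$, with no annulus, and the packing bound $\sum|Q'|\le 2^{d+1}|\Omega_Q|\le\varepsilon|Q|$ survives after enlarging $A$. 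Relatedly, $|T(\vec f\chi_{3R_N})(x)|\le M_T(\vec f\chi_{3R_N})(x)$ is false as stated: shrinking the cube leaves the local piece $T(\vec f\chi_{3Q(x,s)})(x)$, which for a bare weak-$(1,1)$ operator contributes $c_dc_T|\vec f(x)|$ at a.e.\ $x$ (this is the content of Lemma 3.2 of \cite{Le}); that extra term is precisely what the $M$-half of your stopping condition is there to control.

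The second gap is the decisive one and is the actual content of \cite{NPTV}. Your sets $\Omega_Q$ are built from $|\vec f|$ and $M_T(\vec f\chi_{3Q})$, so the estimate your construction yields is $|v_Q(x)|\le\frac{Cc_T}{\varepsilon}\langle|\vec f|\rangle_{3Q}$, i.e.\ membership in a Euclidean \emph{ball}. Membership in $C\langle\langle\vec f\rangle\rangle_{3Q}$ requires the directional bound $|\langle v_Q(x),e\rangle|\le \frac{C}{|3Q|}\int_{3Q}|\langle\vec f,e\rangle|$ for \emph{every} $e$, and the right-hand side can be far smaller than $\langle|\vec f|\rangle_{3Q}$ (if $\vec f=(f_1,0,\dots,0)$ the body is a segment, the ball is not). ``Applying the estimate above with $\langle|\vec f|\rangle_{3Q}$ replaced by $\langle|\langle\vec f,e\rangle|\rangle_{3Q}$'' would require $y_0$ to avoid the exceptional set associated with the scalar function $\langle\vec f,e\rangle$, which is not contained in your $\Omega_{R_j}$; rerunning the stopping construction for each $e$ produces an $e$-dependent sparse family, which is not what the theorem asserts, and taking the union of the exceptional sets over all directions destroys the measure bound. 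The resolution in \cite{NPTV} is to build the directional control into a single stopping condition using the John ellipsoid (reducing operator) of $\langle\langle\vec f\rangle\rangle_{3Q}$: one adds to $\Omega_Q$ the $n$ sets where $M_T(\langle\vec f,e_i^Q\rangle\chi_{3Q})$ or $M(\langle\vec f,e_i^Q\rangle\chi_{3Q})$ exceeds $\frac{Ac_T}{\varepsilon}\langle|\langle\vec f,e_i^Q\rangle|\rangle_{3Q}$, where $e_1^Q,\dots,e_n^Q$ are the principal directions of the ellipsoid. The measure bound costs a factor $n$ (absorbed into $A=A(d,n)$, whence $c_{d,n}$), and since the ellipsoid approximates the body within $\sqrt n$, control of these $n$ directions gives the bound on $|\langle v_Q(x),e\rangle|$ for all $e$ and hence the claimed membership. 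Without this device your argument proves only the scalar-norm sparse bound, which is insufficient for the matrix-weighted applications in the rest of the paper. The final measurable-selection step for the kernels $k_Q$ is fine once the membership is in place.
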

Our purpose in this section is to establish the following vector-valued
counterpart for commutators extending \cite[Theorem 1.1]{LORR}.

\begin{thm}
\label{Thm:ConvexBodyComm}Let $T:L^{1}(\mathbb{R}^{d})\rightarrow L^{1,\infty}(\mathbb{R}^{d})$
be a linear operator such that also $M_{T}:L^{1}(\mathbb{R}^{d})\rightarrow L^{1,\infty}(\mathbb{R}^{d})$.
For $f\in L_{c}^{\infty}(\mathbb{R}^{d};{  \mathbb{C}^{n}})$, every $b\in L_{loc}^{1}$
and $\varepsilon\in(0,1)$ there exists a $(1-\varepsilon)$-sparse
collection of dyadic cubes such that
\[
[b,T]\vec{f}(x)\in\frac{c_{d,n}c_{T}}{\varepsilon}\sum_{j=1}^{{  3^{d}}}\sum_{Q\in\mathcal{S}_{j}}\left[(b(x)-\langle b\rangle_{Q})\langle\langle\vec{f}\rangle\rangle_{Q}\chi_{Q}(x)+\langle\langle(b-b_{Q})\vec{f}\rangle\rangle_{Q}\chi_{Q}(x)\right]
\]
where each { $c_{d, n}$ is a constant depending on $n$ and $d$ }and $c_{T}=\|T\|_{L^{1}\rightarrow L^{1,\infty}}+\|M_{T}\|_{L^{1}\rightarrow L^{1,\infty}}$.
 More precisely, there exist functions $k_{Q},\,k_{Q}^{*}\in B_{L^{\infty}(Q\times Q)}$
such that
\[
\begin{split}[b,T]\vec{f}(x) & =\frac{c_{d,n}c_{T}}{\varepsilon}\sum_{j=1}^{{  3^{d}}}\sum_{Q\in\mathcal{S}_{j}}(b(x)-\langle b\rangle_{Q})\left(\frac{1}{|Q|}\int_{Q}k_{Q}(x,y)\vec{f}(y)dy\right)\chi_{Q}(x)\\
 & {  - } \frac{c_{d,n}c_{T}}{\varepsilon}\sum_{j=1}^{{  3^{d}}}\sum_{Q\in\mathcal{S}_{j}}\left(\frac{1}{|Q|}\int_{Q}k_{Q}^{*}(x,y)(b(y)-b_{Q})\vec{f}(y)dy\right)\chi_{Q}(x)
\end{split}
\]
\end{thm}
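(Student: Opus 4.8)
The plan is to imitate the recursive argument behind Theorem~\ref{Thm:ConvexBody} (from \cite{NPTV}; see also \cite{HNotes}), fed through the elementary pointwise identity
\[
[b,T]\vec g(x)=(b(x)-\beta)\,T\vec g(x)-T\big((b-\beta)\vec g\big)(x),\qquad\beta\in\mathbb{C},
\]
valid for every constant $\beta$ because the two $\beta$-terms cancel by linearity of $T$. As is standard, it is enough to prove a local, dyadic version: fixing one dyadic lattice $\mathcal{D}$, for every $Q_{0}\in\mathcal{D}$ there is a $\tfrac12$-sparse family $\mathcal{F}\subseteq\mathcal{D}(Q_{0})$ with $Q_{0}\in\mathcal{F}$ such that, for a.e. $x\in Q_{0}$,
\[
[b,T](\vec f\chi_{3Q_{0}})(x)\in c_{d,n}c_{T}\sum_{Q\in\mathcal{F}}\Big[(b(x)-\langle b\rangle_{Q})\langle\langle\vec f\rangle\rangle_{3Q}\chi_{Q}(x)+\langle\langle(b-\langle b\rangle_{Q})\vec f\rangle\rangle_{3Q}\chi_{Q}(x)\Big].
\]
Passing from this to the statement of the theorem --- replacing the triples $3Q$ and the averages $\langle b\rangle_{Q}$ by honest cubes of $3^{d}$ shifted dyadic lattices and by the corresponding averages, and exhausting $\mathbb{R}^{d}$ by cubes $Q_{0}$ --- is the same reduction that turns the local form of Theorem~\ref{Thm:ConvexBody} into its stated form, combined with the reconciliation of the two scales of reference averages carried out in the scalar case in \cite{LORR}. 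Throughout we use that each $\langle\langle\vec g\rangle\rangle_{Q}$ is symmetric, convex and compact, together with the elementary inclusions $\langle\langle\vec u+\vec v\rangle\rangle_{Q}\subseteq\langle\langle\vec u\rangle\rangle_{Q}+\langle\langle\vec v\rangle\rangle_{Q}$, $\langle\langle\vec u\chi_{E}\rangle\rangle_{Q}\subseteq\langle\langle\vec u\rangle\rangle_{Q}$ for $E\subseteq Q$, $\langle\langle\lambda\vec u\rangle\rangle_{Q}=\lambda\langle\langle\vec u\rangle\rangle_{Q}$ for $\lambda\in\mathbb{C}$, and $\langle\langle\vec u\rangle\rangle_{Q}\subseteq c_{d}\langle\langle\vec u\rangle\rangle_{Q'}$ when $Q\subseteq Q'$ with $|Q'|\le c_{d}|Q|$; all are immediate from the definition and the facts recalled above.

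For the local estimate, fix $Q_{0}$. Since $b$ is only locally integrable we first truncate: with $b^{N}=\max(-N,\min(N,b))$ the function $(b^{N}-\langle b^{N}\rangle_{Q_{0}})\vec f$ lies in $L^{\infty}_{c}$ (as $\vec f\in L^{\infty}_{c}$), so Theorem~\ref{Thm:ConvexBody} applies to it; all bounds obtained will be uniform in $N$, and a limiting argument as in \cite{LORR} removes the truncation at the end, so from now on $b$ is bounded and we drop the superscript. Put $\beta_{0}=\langle b\rangle_{Q_{0}}$ and apply the identity on $Q_{0}$ with $\beta=\beta_{0}$. Now run the local step of the proof of Theorem~\ref{Thm:ConvexBody} simultaneously for the two inputs $\vec f$ and $(b-\beta_{0})\vec f$: let $\{P_{j}\}\subseteq\mathcal{D}(Q_{0})$ be the maximal cubes at which at least one of the average-growth or grand-maximal stopping conditions for one of these two inputs holds, each threshold a large dimensional multiple of the corresponding average over $3Q_{0}$. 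By the usual Calder\'on--Zygmund / weak-$(1,1)$ counting for $T$ and $M_{T}$ the thresholds can be taken so that $\sum_{j}|P_{j}|\le\tfrac12|Q_{0}|$, and since $Q_{0}\setminus\bigcup_{j}P_{j}$ lies in the good set for both inputs, the local step of \cite{NPTV} yields, for a.e. $x\in Q_{0}$,
\[
T(\vec f\chi_{3Q_{0}})(x)-\sum_{j}T(\vec f\chi_{3P_{j}})(x)\chi_{P_{j}}(x)\in c_{d,n}c_{T}\,\langle\langle\vec f\rangle\rangle_{3Q_{0}}\chi_{Q_{0}}(x),
\]
and likewise with $(b-\beta_{0})\vec f$ in place of $\vec f$.

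Multiplying the first inclusion by the scalar $b(x)-\beta_{0}$ and subtracting the second gives, for a.e. $x\in Q_{0}$,
\[
[b,T](\vec f\chi_{3Q_{0}})(x)\in c_{d,n}c_{T}\Big[(b(x)-\beta_{0})\langle\langle\vec f\rangle\rangle_{3Q_{0}}+\langle\langle(b-\beta_{0})\vec f\rangle\rangle_{3Q_{0}}\Big]\chi_{Q_{0}}(x)+\sum_{j}R_{j}(x)\chi_{P_{j}}(x),
\]
where $R_{j}(x)=(b(x)-\beta_{0})T(\vec f\chi_{3P_{j}})(x)-T\big((b-\beta_{0})\vec f\chi_{3P_{j}}\big)(x)$, and the bracketed convex body is precisely the $Q=Q_{0}$ summand. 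The key observation is that, by the same identity (again the $\beta$-terms cancel), $R_{j}(x)=[b,T](\vec f\chi_{3P_{j}})(x)$, \emph{independently} of the fact that $\beta_{0}$ is the average over $Q_{0}$ rather than over $P_{j}$; hence the residual sum equals $\sum_{j}[b,T](\vec f\chi_{3P_{j}})(x)\chi_{P_{j}}(x)$, to which the local estimate on each $P_{j}$ (with its own reference constant $\langle b\rangle_{P_{j}}$) applies. Iterating, and using that at each stage the new stopping cubes have total measure at most half that of the cube being decomposed --- so that the union of stopping cubes of depth $\ge k$ shrinks to a null set and the iteration holds almost everywhere --- one collects all stopping cubes into a single family $\mathcal{F}$; the sets $E_{Q}=Q\setminus\bigcup\{P\in\mathcal{F}:P\subsetneq Q\}$ are pairwise disjoint with $|E_{Q}|\ge\tfrac12|Q|$, which is the required $\tfrac12$-sparseness, and finally $N\to\infty$ recovers the original $b$.

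The two places where genuine work is required --- and where the vector-valued setting departs from the scalar argument of \cite{LORR} --- are, first, running the local lemma of \cite{NPTV} in the exact, convex-body-valued form used above simultaneously for the two inputs $\vec f$ and $(b-\langle b\rangle_{Q_{0}})\vec f$, i.e.\ with a single stopping family that preserves both the set membership and the $\tfrac12$-sparseness (this is the main obstacle; it is handled by taking the common refinement of the two stopping families and checking that the resulting good set still verifies the averaged and grand-maximal bounds for both inputs); and, second, the final reconciliation of the averages over $3Q$ and over $Q$, and of the reference constants $\langle b\rangle_{Q}$ attached to the two cube scales, when passing from the dyadic local estimate to the statement over the $3^{d}$ shifted lattices --- this is exactly the bookkeeping done in \cite{LORR}, the only difference being that every step must be read as an inclusion of symmetric convex bodies, which the stability properties listed at the outset guarantee. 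Note that the commutator structure itself is produced by nothing more than the bare identity together with the cancellation $R_{j}=[b,T](\vec f\chi_{3P_{j}})$; in particular no $BMO$-type assumption on $b$ is used, and specialising to $n=1$, reading ``$\in c\langle\langle\cdot\rangle\rangle_{Q}$'' as ``$|\cdot|\le c\langle|\cdot|\rangle_{Q}$'', gives a new, cancellation-based proof of \cite[Theorem 1.1]{LORR}.
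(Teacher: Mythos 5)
Your argument is correct in outline, but it follows a genuinely different route from the paper. The paper's proof is essentially algebraic: it lifts $\vec f$ to the $\mathbb{C}^{2n}$-valued function $\tilde f=(\vec f,\vec f)^{t}$, conjugates $T$ by the block matrix $\Phi=\begin{pmatrix}1 & b\otimes 1\\ 0 & 1\end{pmatrix}$ so that $\Phi\, (T\Phi^{-1}\tilde f)$ carries $T\vec f+[b,T]\vec f$ in its first component and $T\vec f$ in its second, applies Theorem \ref{Thm:ConvexBody} \emph{once} to $\Phi^{-1}\tilde f$, and then adds and subtracts $\langle k_Q(x,\cdot)\vec f\rangle_Q\langle b\rangle_Q$ inside the resulting sparse formula. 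This costs only a few lines given Theorem \ref{Thm:ConvexBody}, automatically produces the precise kernel representation with $k_Q^{*}=k_Q$ (no separate measurable-selection step), and — crucially — fixes the sparse family \emph{before} the commutator structure is extracted, so the reference constant $\langle b\rangle_Q$ can be chosen over the sparse cube $Q$ itself with no reconciliation needed. Your route instead re-runs the NPTV stopping-time recursion fed through the identity $[b,T]\vec g=(b-\beta)T\vec g-T((b-\beta)\vec g)$ and the cancellation $R_j=[b,T](\vec f\chi_{3P_j})$; this is the convex-body lift of the original argument of \cite{LORR}, and it works, but the two points you flag as requiring work are real: the local lemma of \cite{NPTV} must be re-proved for a common stopping family serving both inputs $\vec f$ and $(b-\beta_0)\vec f$ (the exceptional sets are already unions over all directions $e\in\mathbb{C}^{n}$, so one more union is harmless, but this needs to be checked against the support-function argument), and the kernel form of the conclusion requires redoing the measurable selection. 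One concrete caution on your ``bookkeeping'' step: changing the reference constant from $\langle b\rangle_{Q_0}$ to $\langle b\rangle_{R}$ for a comparable cube $R$ costs an extra term $2|\langle b\rangle_{R}-\langle b\rangle_{Q_0}|\,\langle\langle\vec f\rangle\rangle_{R}$, which is \emph{not} controlled for general $b\in L^{1}_{loc}$; since your identity holds for any constant $\beta$, you should simply take $\beta_0=\langle b\rangle_{R_{Q_0}}$ with $R_{Q_0}$ the shifted-lattice cube containing $3Q_0$ from the outset, which makes the reconciliation an honest inclusion of convex bodies. Finally, a small inversion at the end: your $n=1$ specialization is essentially the original proof of \cite{LORR} rather than a new one; it is the paper's conjugation argument that yields a genuinely new proof of the scalar case.
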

We observe that A. Lerner \cite{Le} proved for Calderón-Zygmund operators
that
\[
\|M_{T}\|_{L^{1}\rightarrow L^{1,\infty}}\leq c_{d}\left(\|T\|_{L^{2}\rightarrow L^{2}}+c_{K}+\|\omega\|_{\text{Dini}}\right)
\]
and it is also a known fact that
\[
\|T\|_{L^{1}\rightarrow L^{1,\infty}}\leq c_{d}\left(\|T\|_{L^{2}\rightarrow L^{2}}+\|\omega\|_{\text{Dini}}\right).
\]
Consequently Theorems \ref{Thm:ConvexBody} and \ref{Thm:ConvexBodyComm}
hold in the case that $T$ is a Calderón-Zygmund operator with
\[
c_{T}=\|T\|_{L^{2}\rightarrow L^{2}}+c_{K}+\|\omega\|_{\text{Dini}}.
\]

\subsection{Proof of Theorem \ref{Thm:ConvexBodyComm}}

$f\in L_{c}^{\infty}(\mathbb{R}^{d};{  \mathbb{C}^{n}})$ and $b\in L_{loc}^{1}$,
we further assume that $b\in L^{\infty}.$ We define the $\Ctwon$
valued function $\tilde{f}$ by
\[
\tilde{f}(x)=\begin{pmatrix}\vec{f}(x)\\
\vec{f}(x)
\end{pmatrix}
\]
and define the $2n\times2n$ block matrix $\Phi(x)$ by
\[
\Phi(x)=\begin{pmatrix}1_{n\times n} & b\otimes1_{n\times n}\\
0 & 1_{n\times n}
\end{pmatrix}
\]
so that
\[
\Phi^{-1}(x)=\begin{pmatrix}1_{n\times n} & -b\otimes1_{n\times n}\\
0 & 1_{n\times n}
\end{pmatrix}.
\]
Then we have that
\[
\Phi^{-1}(y)\tilde{f}(y)=\begin{pmatrix}1_{n\times n} & -b(y)\otimes1_{n\times n}\\
0 & 1_{n\times n}
\end{pmatrix}\begin{pmatrix}\vec{f}(y)\\
\vec{f}(y)
\end{pmatrix}=\begin{pmatrix}\vec{f}(y)-\vec{f}(y)b(y)\\
\vec{f}(y)
\end{pmatrix}
\]
By assumption, $\Phi^{-1}\tilde{f}$ is bounded with compact support.
A direct computation shows that
\[
\Phi(x)(T\Phi^{-1}\tilde{f})(x)=\begin{pmatrix}T\vec{f}(x)+[b,T]\vec{f}(x)\\
T\vec{f}(x)
\end{pmatrix}
\]
Lets plug in $\Phi^{-1}(y)\tilde{f}(y)$ into \eqref{eq:Sparse} and
equate components. Namely
$$\Phi^{-1} (y) \tilde{f}(y) = \begin{pmatrix}1_{n \times n} & - b(y) \otimes 1_{n \times n}   \\ 0 & 1_{n \times n}  \end{pmatrix} \begin{pmatrix} f(y) \\ f(y) \end{pmatrix} = \begin{pmatrix} f(y) - f(y) b(y)  \\ f(y) \end{pmatrix} $$
 so that
\begin{align*}
\Phi(x)(T\Phi^{-1}\tilde{f})(x) & =c_{d,n}c_{T}\sum_{j=1}^{3^{d}}\sum_{Q\in\MC{S}_{j}}\Phi(x)\begin{pmatrix}\langle k_{Q}(x,\cdot)(f-fb)\rangle_{Q}\\
\langle k_{Q}(x,\cdot)f\rangle_{Q}
\end{pmatrix}\chi_{Q}(x)\\
 & =c_{d,n}c_{T}\sum_{j=1}^{3^{d}}\sum_{Q\in\MC{S}_{j}}\begin{pmatrix}\langle k_{Q}(x,\cdot)(f-fb)\rangle_{Q}+b(x)\langle k_{Q}(x,\cdot)f\rangle_{Q}\\
\langle k_{Q}(x,\cdot)f\rangle_{Q}
\end{pmatrix}\chi_{Q}(x)
\end{align*}
However, adding and subtracting $\langle k_{Q}(x,\cdot)f\rangle_{Q}\langle b\rangle_{Q}$
to the first component, we get
\[
\begin{split} & \Phi(x)(T\Phi^{-1}\tilde{f})(x)\\
 & =c_{d,n}c_{T}\sum_{j=1}^{3^{d}}\sum_{Q\in\MC{S}_{j}}\begin{pmatrix}\langle k_{Q}(x,\cdot)(f-fb)\rangle_{Q}+b(x)\langle k_{Q}(x,\cdot)f\rangle_{Q}\\
\langle k_{Q}(x,\cdot)f\rangle_{Q}
\end{pmatrix}\chi_{Q}(x)\\
 & =c_{d,n}c_{T}\sum_{j=1}^{3^{d}}\sum_{Q\in\MC{S}_{j}}\begin{pmatrix}\langle k_{Q}(x,\cdot)f\rangle_{Q}+\langle k_{Q}(x,\cdot)f(\langle b\rangle_{Q}-b)\rangle_{Q}+(b(x)-\langle b\rangle_{Q})\langle k_{Q}(x,\cdot)\vec{f}\rangle_{Q}\\
\langle k_{Q}(x,\cdot)\vec{f}\rangle_{Q}
\end{pmatrix}\chi_{Q}(x)
\end{split}
\]

Hence,
\begin{align*}
[b,T]\vec{f}(x) & =c_{d,n}c_{T}\sum_{j=1}^{3^{d}}\sum_{Q\in\MC{S}_{j}}\dashint_{Q}k_{Q}(x,y)(b(y)-\langle b\rangle_{Q})\vec{f}(y)\,dy\\
 & +(b(x)-\langle b\rangle_{Q})\pr{\dashint_{Q}f(y)k_{Q}(x,y)\vec{f}(y)\,dy}
\end{align*}
and we are done.
\begin{rem}
The proof presented above works as well in the case $n=1$, hence
providing a new proof for the scalar case that was settled in \cite{LORR}.
\end{rem}

\section{The reverese Hölder inquality. $A_{1},\,A_{q}$ and $A_{q,\infty}^{sc}$
weights}\label{sec:Apweights}

%As we noted in the introduction, the definition of $A_{p}$ weights that we presented
%there was not the first one that appeared in the literature but a characterization
%due to S. Roudenko \cite{R}. Given a matrix weight $W$, we can define
%a family of norms $\rho_{t}$ as follows
%\[
%\rho_{t}(x)=\left|W^{\frac{1}{p}}(t)x\right|\qquad x\in {  \mathbb{R}^{d},\,t\in\mathbb{C}^{n}}.
%\]
%The dual metric $\rho^{*}$ is then given by
%\[
%\rho_{t}^{*}(x)=\sup_{y\not=0}\frac{|(x,y)|}{\rho_{t}(y)}=\left|W^{-\frac{1}{p}}(t)x\right|.
%\]
%Following now ideas in \cite{V}, we can introduce the quantities
%$\rho_{p,Q}$ and $\rho_{p',Q}^{*}$ averaging over a cube $Q$
%\[
%\rho_{p,Q}(x)=\left(\frac{1}{|Q|}\int_{Q}\rho_{t}(x)^{p}dt\right)^{\frac{1}{p}},\qquad\rho_{p',Q}^{*}(x)=\left(\frac{1}{|Q|}\int_{Q}\rho_{t}^{*}(x)^{p'}dt\right)^{\frac{1}{p'}}.
%\]
%The preceding notions allow us to introduce the notion of matrix $A_{p}$
%weight.
%\begin{defn}
%For $1<p<\infty$ we say that $W$ is an $A_{p}$ matrix weight if
%$W: { \mathbb{R}^{d}}\rightarrow\mathbb{C}^{n\times n}$ is such that
%$W$ and $W^{-\frac{p'}{p}}$ are locally integrable and there exists
%$C<\infty$ such that
%\[
%\rho_{p',Q}^{*}(x)\leq C\left(\rho_{p,Q}(x)\right)^{*}.
%\]
%We observe that the preceding condition is equivalent to
%\[
%\rho_{p,Q}(x)\leq C\left(\rho_{p',Q}^{*}(x)\right)^{*},
%\]
%in other words, $\rho^{*}$ is an $A_{p'}$-metric.
%
%Now
We recall that if $\rho(x)$ is a norm on ${  \mathbb{C}^{n}}$ there
exists a positive matrix $A$, that we call reducing operator of $\rho$
such that
\[
\rho(x)\simeq|Ax|\qquad x\in{  \mathbb{C}^{n}}.
\]
If $1\leq p<\infty$ we will call $\mathcal{W}_{Q,p}$ the reducing
operator for
\[
\rho_{W,p,Q}(x)=\left(\frac{1}{|Q|}\int_{Q}\left|W^{\frac{1}{p}}(t)x\right|^{p}dt\right)^{\frac{1}{p}}.
\]
In the case $1<p<\infty$ we shall call $\mathcal{W}'_{Q,p}$ the reducing
operator for
\[
\rho_{W,p',Q}^{*}(x)=\left(\frac{1}{|Q|}\int_{Q}\left|W^{-\frac{1}{p}}(t)x\right|^{p'}dt\right)^{\frac{1}{p'}}.
\]
It follows from the proof of Roudenko's characterization \cite[Lemma 1.3]{R}
that
\[
[W]_{A_{p}}\simeq\|\mathcal{W}_{Q,p}\mathcal{W}'_{Q,p}\|^{p}\qquad1<p<\infty.
\]
Now we observe that if we call $V=W^{-\frac{1}{p-1}}$, we have that
\[
\rho_{V,p',Q}(x)=\left(\frac{1}{|Q|}\int_{Q}\left|V^{\frac{1}{p'}}(t)x\right|^{p'}dt\right)^{\frac{1}{p'}}=\left(\frac{1}{|Q|}\int_{Q}\left|W^{-\frac{1}{p}}(t)x\right|^{p'}dt\right)^{\frac{1}{p'}}=\rho_{W,p',Q}^{*}(x)
\]
This yields that we can take $\mathcal{V}_{Q,p'}=\mathcal{W}'_{Q,p}$.
Analogously
\[
\rho_{V,p,Q}^{*}(x)=\left(\frac{1}{|Q|}\int_{Q}\left|V^{-\frac{1}{p'}}(t)x\right|^{p}dt\right)^{\frac{1}{p}}=\left(\frac{1}{|Q|}\int_{Q}\left|W^{\frac{1}{p}}(t)x\right|^{p}dt\right)^{\frac{1}{p}}=\rho_{W,p,Q}(x)
\]
and we can choose $\mathcal{V}'_{Q,p'}=\mathcal{W}_{Q,p}$. Consequently
we have that
\[
[V]_{A_{p'}}\simeq\|\mathcal{V}{}_{Q,p'}\mathcal{V}'_{Q,p'}\|^{p'}=\|\mathcal{W}_{Q,p}\mathcal{W}'_{Q,p}\|^{p'}
\]

The preceding discussion can be summarized in the following proposition.
%\end{defn}
\begin{prop}
\label{Prop:AqAqprime}Let $1<p<\infty$. Then
\[
[W]_{A_{p}}\simeq\left[W^{-\frac{1}{p-1}}\right]_{A_{p'}}^{\frac{1}{p-1}}.
\]
\end{prop}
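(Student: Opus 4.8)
The plan is to read the proposition off the reducing-operator reformulation of the matrix $A_p$ condition recorded just above, namely $[W]_{A_p}\simeq\sup_Q\|\mathcal{W}_{Q,p}\mathcal{W}'_{Q,p}\|^{p}$. Set $V:=W^{-\frac{1}{p-1}}$; since $W$ is positive definite a.e., so is $V$, by functional calculus. The only computation needed is pointwise in $t$: because $\frac{1}{(p-1)p'}=\frac{1}{p}$, we have $V^{\frac{1}{p'}}(t)=W^{-\frac{1}{p}}(t)$ and $V^{-\frac{1}{p'}}(t)=W^{\frac{1}{p}}(t)$. Hence the two averaging norms attached to $V$ with exponent $p'$ are exactly the two attached to $W$ with exponent $p$, with the roles of $W^{1/p}$ and $W^{-1/p}$ interchanged: $\rho_{V,p',Q}=\rho_{W,p',Q}^{*}$ and $\rho_{V,p,Q}^{*}=\rho_{W,p,Q}$. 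Since the reducing operator of a norm on $\mathbb{C}^{n}$ is unique up to a dimensional multiplicative constant, we may therefore take $\mathcal{V}_{Q,p'}=\mathcal{W}'_{Q,p}$ and $\mathcal{V}'_{Q,p'}=\mathcal{W}_{Q,p}$.

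Next I would apply the same reducing-operator characterization to the weight $V$ with exponent $p'$, obtaining $[V]_{A_{p'}}\simeq\sup_Q\|\mathcal{V}_{Q,p'}\mathcal{V}'_{Q,p'}\|^{p'}=\sup_Q\|\mathcal{W}'_{Q,p}\mathcal{W}_{Q,p}\|^{p'}$. As $\mathcal{W}_{Q,p}$ and $\mathcal{W}'_{Q,p}$ are self-adjoint, $\|\mathcal{W}'_{Q,p}\mathcal{W}_{Q,p}\|=\|(\mathcal{W}_{Q,p}\mathcal{W}'_{Q,p})^{*}\|=\|\mathcal{W}_{Q,p}\mathcal{W}'_{Q,p}\|$, so writing $A:=\sup_Q\|\mathcal{W}_{Q,p}\mathcal{W}'_{Q,p}\|$ we have simultaneously $[W]_{A_p}\simeq A^{p}$ and $[V]_{A_{p'}}\simeq A^{p'}$ (in particular one constant is finite precisely when the other is). Eliminating $A$ between these two comparisons then gives $[W]_{A_p}\simeq [V]_{A_{p'}}^{p/p'}$, which is the asserted equivalence, with implied constants depending only on $n$ and $p$.

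Structurally there is no real obstacle: all the analytic content is already packaged in Roudenko's characterization, and what remains is a change-of-weight bookkeeping argument. The points requiring care are: that $\rho(\cdot)\simeq|A\cdot|$ and Roudenko's equivalence each carry $n$- and $p$-dependent constants, which propagate into the final $\simeq$; that the identifications $\mathcal{V}_{Q,p'}=\mathcal{W}'_{Q,p}$ and $\mathcal{V}'_{Q,p'}=\mathcal{W}_{Q,p}$ are legitimate only up to such constants (uniqueness of reducing operators); and that passing from $\|\mathcal{W}'_{Q,p}\mathcal{W}_{Q,p}\|$ to $\|\mathcal{W}_{Q,p}\mathcal{W}'_{Q,p}\|$ relies on the reducing operators being positive definite, hence ultimately on the positive-definiteness of $W$.
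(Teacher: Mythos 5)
Your argument is correct and is essentially the paper's own proof: the proposition is stated precisely as a summary of the reducing-operator discussion you reproduce (Roudenko's characterization, the identifications $\mathcal{V}_{Q,p'}=\mathcal{W}'_{Q,p}$, $\mathcal{V}'_{Q,p'}=\mathcal{W}_{Q,p}$, and the adjoint/self-adjointness step that the paper leaves implicit when it writes $\|\mathcal{V}_{Q,p'}\mathcal{V}'_{Q,p'}\|=\|\mathcal{W}_{Q,p}\mathcal{W}'_{Q,p}\|$). One remark: eliminating $A$ yields $[W]_{A_{p}}\simeq\left[W^{-\frac{1}{p-1}}\right]_{A_{p'}}^{p/p'}=\left[W^{-\frac{1}{p-1}}\right]_{A_{p'}}^{p-1}$, whose exponent is the reciprocal of the $\frac{1}{p-1}$ printed in the statement; your exponent is the correct one (and is the form in which the proposition is actually invoked in \eqref{eq:Aq}), so the discrepancy is a typo in the statement rather than a gap in your proof.
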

In our next result we show that the $A_{1}$ type conditions constants
control the corresponding $A_{\infty}$ constants. We include in the
statement the case of the $A_{q}$ constant that was already established
in \cite{CUIM} for the sake of completeness.

\begin{prop}
If $1\leq q<\infty$ and $W\in A_{q}$, then $[W]_{A_{q,\infty}^{sc}}\leq c_{n}[W]_{A_{q}}$.
\end{prop}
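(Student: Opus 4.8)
The plan is to reduce the matrix-weighted statement to the scalar reverse Hölder inequality applied along each direction $e\in\mathbb{C}^n$. Recall that $[W]_{A_{q,\infty}^{sc}}=\sup_{e}\big[\,|W^{1/q}e|^q\,\big]_{A_\infty}$, so it suffices to produce a single scalar weight dominating all of the functions $w_e(x)=|W^{1/q}(x)e|^q$, or to bound each $[w_e]_{A_\infty}$ by $c_n[W]_{A_q}$ uniformly in $e$. I would first dispose of the case $q=1$ and the case $q>1$ together by the same device: fix a cube $Q$ and a unit vector $e$, and estimate $\dfrac{1}{w_e(Q)}\int_Q M(w_e\chi_Q)$ directly. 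The key observation is that for $x\in Q$,
\[
M(w_e\chi_Q)(x)=\sup_{R\ni x,\;R\subseteq Q}\frac{1}{|R|}\int_R |W^{1/q}(t)e|^q\,dt
\le \|\mathcal{W}_{R,q}e\|^q\lesssim \rho_{W,q,R}(e)^q,
\]
where $\mathcal{W}_{R,q}$ is the reducing operator introduced in Section~\ref{sec:Apweights}; thus the maximal function at $x$ is controlled, up to dimensional constants, by $\sup_{R\ni x}|\mathcal{W}_{R,q}e|^q$.

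Next I would use the $A_q$ hypothesis to compare $\mathcal{W}_{R,q}$ for a subcube $R\subseteq Q$ with the pointwise value $W^{1/q}(y)$ for $y\in Q$. Concretely, from the definitions one has, for a.e.\ $y\in R$,
\[
\Big(\frac{1}{|R|}\int_R \|W^{1/q}(t)W^{-1/q}(y)\|^{q'}\,dt\Big)^{1/q'}\le [W]_{A_q}^{1/q},
\]
(or the $\esssup$ version when $q=1$), which gives $\|\mathcal{W}_{R,q}W^{-1/q}(y)\|\lesssim [W]_{A_q}^{1/q}$ for a.e.\ $y$. Hence $|\mathcal{W}_{R,q}e|\le \|\mathcal{W}_{R,q}W^{-1/q}(y)\|\,|W^{1/q}(y)e|\lesssim [W]_{A_q}^{1/q}\,|W^{1/q}(y)e|$ for a.e.\ $y\in R$, and in particular for a.e.\ $y\in E$ for any positive-measure $E\subseteq R$. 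Integrating the $q$-th power over $R$ and using $R\subseteq Q$ yields $|\mathcal{W}_{R,q}e|^q\,|R|\lesssim [W]_{A_q}\int_R |W^{1/q}(y)e|^q\,dy=[W]_{A_q}\,w_e(R)\le [W]_{A_q}\,w_e(Q)$. Therefore $M(w_e\chi_Q)(x)\lesssim [W]_{A_q}\,\dfrac{w_e(Q)}{|R_x|}$ — but I need the bound integrated against $dx$, so I should instead run the standard argument: for each $x$ pick a near-optimal $R=R_x$, apply the estimate $\int_R |W^{1/q}(t)e|^q dt\lesssim [W]_{A_q}\,\inf_{y\in R}|W^{1/q}(y)e|^q\,|R|$ together with a Vitali/Besicovitch covering of $Q$ by such cubes, and sum up, obtaining $\int_Q M(w_e\chi_Q)\lesssim_n [W]_{A_q}\,w_e(Q)$. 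Dividing by $w_e(Q)$ and taking the supremum over $Q$ and $e$ gives $[W]_{A_{q,\infty}^{sc}}\le c_n[W]_{A_q}$.

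The main obstacle is the passage from the integral average $\dfrac{1}{|R|}\int_R|W^{1/q}(t)e|^q\,dt$ to a pointwise lower bound of the form $c\,[W]_{A_q}^{-1}\inf_{y}|W^{1/q}(y)e|^q$ uniformly over subcubes $R$, i.e.\ extracting from the matrix $A_q$ condition a genuine "self-improving" lower bound in every direction; this is where the reducing operator formalism and the a.e.\ estimate $\|\mathcal{W}_{R,q}W^{-1/q}(y)\|\lesssim[W]_{A_q}^{1/q}$ do the real work, and one must be careful that the $\esssup$ in the $q=1$ case is handled so that the inequality holds simultaneously for almost every $y$ in the cube rather than merely for one fixed $y$. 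Once that pointwise comparison is in hand, the remainder is the classical Fujii–Wilson-type estimate and the covering argument, which are routine.
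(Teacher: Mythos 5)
For $q=1$ your plan is sound and is essentially the proof the paper gives (note that the paper only writes out the case $q=1$ and quotes \cite{CUIM} for $1<q<\infty$): the essential supremum in $y$ built into the definition of $[W]_{A_1}$ yields, for every cube $R$ and a.e.\ $y\in R$,
\[
\frac{1}{|R|}\int_R|W(t)e|\,dt\le\left(\frac{1}{|R|}\int_R\|W(t)W^{-1}(y)\|\,dt\right)|W(y)e|\le[W]_{A_1}\,|W(y)e|,
\]
so $w_e=|We|$ is a scalar $A_1$ weight with constant at most $[W]_{A_1}$, and the Fujii--Wilson bound follows at once; no covering argument is needed, since $M(w_e\chi_Q)\lesssim_d[W]_{A_1}\esssinf_Q w_e$ pointwise on $Q$ and one simply integrates over $Q$.

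The case $q>1$ does not go through, and the failure is exactly at the step you single out as doing ``the real work''. The estimate $\|\mathcal{W}_{R,q}W^{-1/q}(y)\|\lesssim[W]_{A_q}^{1/q}$ for a.e.\ $y\in R$ (equivalently, your displayed bound on $\bigl(\frac{1}{|R|}\int_R\|W^{1/q}(t)W^{-1/q}(y)\|^{q'}dt\bigr)^{1/q'}$) is an $A_1$-type condition that is \emph{not} implied by $W\in A_q$: for $q>1$ the $A_q$ constant only controls the double average $\frac{1}{|Q|}\int_Q\bigl(\frac{1}{|Q|}\int_Q\|W^{1/q}(x)W^{-1/q}(y)\|^{q'}dy\bigr)^{q/q'}dx$, and there is no essential supremum in the definition that allows you to freeze $y$. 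Already for $n=1$ the consequence you draw, $|\mathcal{W}_{R,q}e|\lesssim[W]_{A_q}^{1/q}|W^{1/q}(y)e|$ a.e., reads $\langle w\rangle_R\lesssim[w]_{A_q}\,w(y)$ for a.e.\ $y\in R$, i.e.\ $w\in A_1$ with $[w]_{A_1}\lesssim[w]_{A_q}$; the weight $w(x)=|x|^{a}$ with $0<a<d(q-1)$ is in $A_q$ but violates this on any cube containing the origin. Once this step is removed, the inequality $|\mathcal{W}_{R,q}e|^q|R|\lesssim[W]_{A_q}\,w_e(R)$ you deduce from it is true but empty (the two sides are comparable by the very definition of the reducing operator, with no $[W]_{A_q}$ needed) and gives no control on $\int_QM(w_e\chi_Q)$; likewise the reformulation $\int_R|W^{1/q}(t)e|^q\,dt\lesssim[W]_{A_q}\inf_{y\in R}|W^{1/q}(y)e|^q|R|$ that feeds your covering argument is again the false $A_1$ condition. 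A genuinely different mechanism is required for $q>1$ --- for instance, showing that the scalar weights $|W^{1/q}e|^q$ lie in scalar $A_q$, hence in $A_\infty$, uniformly in $e$ with the correct constant, which uses the dual reducing operators $\mathcal{W}'_{Q,q}$ and the characterization $[W]_{A_q}\simeq\sup_Q\|\mathcal{W}_{Q,q}\mathcal{W}'_{Q,q}\|^{q}$ rather than any pointwise comparison --- and this is precisely why the paper defers that case to \cite{CUIM}.
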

\begin{proof}
We just settle the case {  $q=1$}. We observe that for every cube $Q$, { 
 a.e $y\in Q$, and every $\vec{e} \in \mathbb{C}^n$}
\[
\frac{1}{|Q|}\int_{Q}\left\Vert W^{-1}(y)W(x)\right\Vert dx=\frac{1}{|Q|}\int_{Q}{  \sup_{\vec{f}\not=0}\frac{|W(x)W^{-1}(y)\vec{f}|}{|\vec{f}|}dx\geq\frac{1}{|Q|}\int_{Q}\frac{|W(x)\vec{e}|}{|W(y)\vec{e}|}dx}
\]
or equivalently
\[
[W]_{A_{1}}|W(y){  \vec{e}}|\geq\frac{1}{|Q|}\int_{Q}|W(x){  \vec{e}}|dx.
\]
Hence
\[
[W]_{A_{1}}|W(y){  \vec{e}}|\geq\sup_{z\in Q}M(\chi_{Q}|W{  \vec{e}}|)(z)
\]
and integrating in $y$ over $Q$,
\[
\int_{Q}M(\chi_{Q}|W{  \vec{e}}|)(y)dy\leq|Q|\sup_{z\in Q}M(\chi_{Q}|W{  \vec{e}}|)(z)\leq[W]_{A_{1}}\int_{Q}|W(y){  \vec{e}}|dy
\]
Consequently
\[
\frac{1}{\int_{Q}|W(y){  \vec{e}}|dy}\int_{Q}M(\chi_{Q}|W{  \vec{e}}|)(y)dy\leq[W]_{A_{1}}
\]
and since the preceding estimate holds for every cube $Q$ and every
$e$ we have that $[W]_{A_{1,\infty}^{sc}}\leq[W]_{A_{1}}$.
\end{proof}
Now we recall the quantitative version of the reverse Hölder inequality.
This estimate was obtained for first in \cite{HPAinfty} (see \cite{HPR}
for another proof).
\begin{lem}
\label{Lem:RHI}Let $w\in A_{\infty}$ then if $0<\delta\leq\frac{1}{2^{d+11}[w]_{A_{\infty}}}$
for every cube $Q\subseteq\mathbb{R}^{d}$ we have that
\[
\left(\frac{1}{|Q|}\int_{Q}w^{1+\delta}\right)^{\frac{1}{1+\delta}}\leq\frac{2}{|Q|}\int_{Q}w.
\]
\end{lem}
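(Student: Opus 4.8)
The plan is to reproduce, with quantitative bookkeeping, the Calderón--Zygmund stopping time proof of Hytönen and Pérez \cite{HPAinfty} (an alternative, BMO/John--Nirenberg based argument appears in \cite{HPR}). Fix a cube $Q\subseteq\mathbb{R}^{d}$. Since both sides of the asserted inequality are homogeneous of degree $1+\delta$ in $w$ (the left after taking the $(1+\delta)$-th root), we may normalize $\frac{1}{|Q|}\int_{Q}w=1$, and it suffices to prove $\frac{1}{|Q|}\int_{Q}w^{1+\delta}\leq 2$ (then $(\frac{1}{|Q|}\int_{Q}w^{1+\delta})^{1/(1+\delta)}\leq 2^{1/(1+\delta)}\leq 2=\frac{2}{|Q|}\int_{Q}w$). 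A routine truncation lets us assume $w\in L^{\infty}$: replacing $w$ by $\min(w,N)$ does not increase $[w]_{A_{\infty}}$, so it is enough to prove the bound for bounded weights with constants independent of $N$ and let $N\to\infty$ by monotone convergence. Writing $w^{1+\delta}=w\cdot w^{\delta}$ and using the layer-cake bound $w(x)^{\delta}\leq 1+\delta\int_{1}^{\infty}\lambda^{\delta-1}\chi_{\{w(x)>\lambda\}}\,d\lambda$ (the threshold being $\frac{1}{|Q|}\int_{Q}w=1$), Fubini gives
\[
\frac{1}{|Q|}\int_{Q}w^{1+\delta}\,dx\leq 1+\delta\int_{1}^{\infty}\lambda^{\delta-1}\phi(\lambda)\,d\lambda,\qquad \phi(\lambda):=\frac{1}{|Q|}\int_{\{x\in Q\,:\,w(x)>\lambda\}}w\,dx,
\]
so it remains to bound $\delta\int_{1}^{\infty}\lambda^{\delta-1}\phi(\lambda)\,d\lambda$ by $1$, which demands a quantitative decay of $\phi$.

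The decay comes from a Calderón--Zygmund stopping time argument driven by the Fujii--Wilson identity $\int_{Q}M(w\chi_{Q})\,dx\leq[w]_{A_{\infty}}w(Q)$ and its restriction to subcubes. For $\lambda\geq 1$ run the dyadic Calderón--Zygmund decomposition of $w$ relative to $Q$ at height $\lambda$: this produces pairwise disjoint maximal dyadic subcubes $\{Q_{j}^{\lambda}\}$ of $Q$ with $\lambda<\langle w\rangle_{Q_{j}^{\lambda}}\leq 2^{d}\lambda$, with $\{x\in Q:w(x)>\lambda\}\subseteq\bigcup_{j}Q_{j}^{\lambda}$ up to a null set, with $\bigcup_{j}Q_{j}^{\lambda}=\{x\in Q:M_{Q}^{d}(w\chi_{Q})(x)>\lambda\}$, and with the property that on each $Q_{j}^{\lambda}$ the dyadic maximal function of $w$ localized to $Q_{j}^{\lambda}$ agrees with $M_{Q}^{d}(w\chi_{Q})$. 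Combining the decompositions along the geometric sequence $\lambda_{k}=(2^{d+1})^{k}$, applying the localized Fujii--Wilson bound on each stopping cube, and summing leads to a self-improving differential inequality for the Lebesgue distribution $G(t)=|\{x\in Q:M_{Q}^{d}(w\chi_{Q})(x)>t\}|$, of the form $\int_{t}^{\infty}G(s)\,ds\leq (2^{d}[w]_{A_{\infty}}-1)\,t\,G(t)$ (using $\int_{\{M_{Q}^{d}(w\chi_{Q})>t\}}M_{Q}^{d}(w\chi_{Q})\,dx\leq[w]_{A_{\infty}}w(\{M_{Q}^{d}(w\chi_{Q})>t\})$ together with $w(\{M_{Q}^{d}(w\chi_{Q})>t\})\leq 2^{d}t\,G(t)$); integrating this ordinary differential inequality gives $G(t)\leq c_{d}[w]_{A_{\infty}}|Q|\,t^{-1-1/(c_{d}[w]_{A_{\infty}})}$, and since $\phi(\lambda)\leq 2^{d}\lambda\,G(\lambda)/|Q|$ one obtains $\phi(\lambda)\leq c_{d}\,[w]_{A_{\infty}}\,\lambda^{-1/(c_{d}'[w]_{A_{\infty}})}$ for $\lambda\geq 1$. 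Plugging this into the integral of the previous paragraph, $\delta\int_{1}^{\infty}\lambda^{\delta-1}\phi(\lambda)\,d\lambda$ is $\leq 1$ as soon as $\delta$ is below a threshold of the form $c_{d}''/[w]_{A_{\infty}}$; carrying the dimensional constants through the argument is what yields precisely the stated range $0<\delta\leq\frac{1}{2^{d+11}[w]_{A_{\infty}}}$.

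The main obstacle is exactly this last quantitative step: one must extract from the single $L^{1}$ estimate for $M(w\chi_{Q})$ a decay of the super-level sets of $w$ whose rate is \emph{linear} in $1/[w]_{A_{\infty}}$, being careful not to lose powers of $2^{d}$ and of $[w]_{A_{\infty}}$ at each generation of stopping cubes (a careless iteration degrades the admissible $\delta$ to order $1/[w]_{A_{\infty}}^{2}$). The cleanest way to package the decay, and the way that makes the sharp constant transparent, is as a John--Nirenberg-type exponential integrability estimate, $\frac{1}{|Q|}\int_{Q}w\,\exp\!\big(\delta\,\log^{+}\tfrac{w}{\langle w\rangle_{Q}}\big)\,dx\leq\langle w\rangle_{Q}\big(1+c_{d}\,\delta[w]_{A_{\infty}}\big)$ for $\delta\leq 1/(c_{d}'[w]_{A_{\infty}})$, which in turn reduces to the moment bounds $\frac{1}{w(Q)}\int_{Q}w\,\big(\log^{+}\tfrac{w}{\langle w\rangle_{Q}}\big)^{k}dx\leq c_{d}\,k!\,(c_{d}'[w]_{A_{\infty}})^{k}$ with a purely dimensional leading constant; from any such estimate the lemma follows by expanding $w^{\delta}=\langle w\rangle_{Q}^{\delta}\exp(\delta\log(w/\langle w\rangle_{Q}))\leq\langle w\rangle_{Q}^{\delta}\exp(\delta\log^{+}(w/\langle w\rangle_{Q}))$, integrating against $w$, and taking $(1+\delta)$-th roots. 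Finally one checks that the estimate is stable under the truncation $w\mapsto\min(w,N)$, which is immediate since $[\min(w,N)]_{A_{\infty}}\leq c_{d}[w]_{A_{\infty}}$ uniformly in $N$.
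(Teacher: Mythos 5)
The paper does not prove this lemma at all: it is quoted verbatim from Hyt\"onen--P\'erez \cite{HPAinfty} (with \cite{HPR} cited for an alternative proof), so your attempt is measured against the original argument. Your skeleton is the correct one --- reduce to $\langle w\rangle_Q=1$, run the dyadic Calder\'on--Zygmund stopping time on $Q$, and combine the Fujii--Wilson functional localized to the stopping cubes with $w(\Omega_\lambda)\le 2^d\lambda\,G(\lambda)$ to get the differential inequality $\int_t^\infty G(s)\,ds\le(2^d[w]_{A_\infty}-1)\,t\,G(t)$ for $G(t)=|\{M_Q^d(w\chi_Q)>t\}|$, $t\ge 1$. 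The problem is how you exploit that inequality. Integrating the ODE first gives $G(t)\lesssim_d[w]_{A_\infty}|Q|\,t^{-1-\epsilon}$ with $\epsilon\simeq_d 1/[w]_{A_\infty}$, hence $\phi(\lambda)\lesssim_d[w]_{A_\infty}\lambda^{-\epsilon}$, and then $\delta\int_1^\infty\lambda^{\delta-1}\phi(\lambda)\,d\lambda\lesssim_d[w]_{A_\infty}\,\delta/(\epsilon-\delta)$. For $\delta\simeq\epsilon/2$ this is of size $[w]_{A_\infty}$, not $1$; the bound is $\le 1$ only for $\delta\lesssim 1/[w]_{A_\infty}^2$. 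So the route you actually carry out yields exactly the degraded threshold you yourself warn against, and the sentence ``carrying the dimensional constants through yields precisely the stated range'' is not true for this computation. Your proposed repair --- the moment bounds $\frac{1}{w(Q)}\int_Q w\,(\log^+\frac{w}{\langle w\rangle_Q})^k\le c_d\,k!\,(c_d'[w]_{A_\infty})^k$ --- is asserted without proof, and since summing these over $k$ \emph{is} the reverse H\"older inequality, invoking them is essentially circular. That is the genuine gap.

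The missing idea is that one should never solve for a pointwise decay of $G$: feed the differential inequality directly into the tail integral and absorb. Concretely, with $H(\lambda)=\int_\lambda^\infty G(s)\,ds$ and $J=\int_1^\infty\lambda^\delta G(\lambda)\,d\lambda$, integration by parts gives $J=H(1)+\delta\int_1^\infty\lambda^{\delta-1}H(\lambda)\,d\lambda\le H(1)+2^d\delta[w]_{A_\infty}J$, so $J\le 2H(1)$ once $2^d\delta[w]_{A_\infty}\le\frac12$; moreover $H(1)\le\int_Q M_Q^d(w\chi_Q)\,dx\le[w]_{A_\infty}w(Q)=[w]_{A_\infty}|Q|$ by the Fujii--Wilson condition applied to $Q$ itself. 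Then the tail term in your first display is at most $\frac{2^d\delta}{|Q|}J\le 2^{d+1}\delta[w]_{A_\infty}\le 2^{-10}$ for $\delta\le 2^{-(d+11)}/[w]_{A_\infty}$, which gives $\frac{1}{|Q|}\int_Q w^{1+\delta}\le 1+2^{-10}\le 2\le(2\langle w\rangle_Q)^{1+\delta}$ --- the linear threshold and the constant $2$ come out of this single absorption, with no loss of a factor of $[w]_{A_\infty}$. (Two smaller remarks: the truncation step is unnecessary --- replace $w^{1+\delta}$ by $\min(w,N)^\delta w$ in the layer-cake computation while keeping $w$ in the stopping time, so you never need the nontrivial claim $[\min(w,N)]_{A_\infty}\le c_d[w]_{A_\infty}$; and your identification of $M_Q^d(w\chi_Q)$ with $M_{Q_j^\lambda}^d(w\chi_{Q_j^\lambda})$ on the stopping cubes is correct and is exactly what makes the localized Fujii--Wilson bound legitimate.)
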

We would like to end up the section presenting a technical result
that will be crucial for the proof of the main results.
\begin{lem}
\label{Lem:Key}Let $1\leq q<p<\infty$. Assume that $W\in A_{q}$
and let $r=1+\frac{1}{2^{d+11}[W]_{A_{q,\infty}^{sc}}}$. Then we
have that a.e $y\in Q$,
\[
\left(\frac{1}{|Q|}\int_{Q}\|W^{\frac{1}{p}}(x)W^{-\frac{1}{p}}(y)\|^{rp}dx\right)^{\frac{1}{rp}}\leq c_{n,p,q}\left(\frac{1}{|Q|}\int_{Q}\|W^{\frac{1}{q}}(x)W^{-\frac{1}{q}}(y)\|^{q}dx\right)^{\frac{1}{p}}.
\]
\end{lem}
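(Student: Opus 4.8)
The plan is to reduce the whole inequality to the scalar reverse Hölder estimate of Lemma~\ref{Lem:RHI}, bridging the two exponents $p$ and $q$ by a pointwise comparison of the two operator norms. First I would prove the pointwise bound
\[
\norm{W^{\frac1p}(x)W^{-\frac1p}(y)}\le \norm{W^{\frac1q}(x)W^{-\frac1q}(y)}^{\frac qp},\qquad x,y\in\mathbb R^d .
\]
This is a consequence of operator monotonicity (Löwner--Heinz): if $\lambda=\norm{W^{1/q}(x)W^{-1/q}(y)}$, then substituting $g=W^{1/q}(y)f$ in $|W^{1/q}(x)W^{-1/q}(y)g|\le\lambda|g|$ gives $W^{2/q}(x)\preceq\lambda^2 W^{2/q}(y)$ as quadratic forms; applying the operator monotone map $S\mapsto S^{\theta}$ with $\theta=q/p\in(0,1)$ yields $W^{2/p}(x)\preceq\lambda^{2q/p}W^{2/p}(y)$, and substituting back $h=W^{-1/p}(y)g$ gives the claim. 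Raising to the power $rp$,
\[
\norm{W^{\frac1p}(x)W^{-\frac1p}(y)}^{rp}\le \norm{W^{\frac1q}(x)W^{-\frac1q}(y)}^{rq}.
\]

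Next I would turn the right-hand operator norm into a sum of scalar $A_\infty$ weights. Let $e_1,\dots,e_n$ be the standard basis and set $v_j=W^{-1/q}(y)e_j$ (nonzero since $W^{-1/q}(y)$ is invertible a.e.). Using that the operator and Hilbert--Schmidt norms of an $n\times n$ matrix are comparable with dimensional constants, together with $|W^{1/q}(x)W^{-1/q}(y)e_j|=|W^{1/q}(x)v_j|$, one gets
\[
\norm{W^{\frac1q}(x)W^{-\frac1q}(y)}^{q}\simeq_{n,q}\Big(\sum_{j=1}^n|W^{\tfrac1q}(x)v_j|^2\Big)^{q/2}\simeq_{n,q}\sum_{j=1}^n w_j(x),\qquad w_j(x):=|W^{\tfrac1q}(x)v_j|^q .
\]
By the very definition of $[W]_{A_{q,\infty}^{sc}}$, each $w_j$ is an $A_\infty$ weight with $[w_j]_{A_\infty}\le[W]_{A_{q,\infty}^{sc}}$. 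Hence, with $\delta=r-1=\tfrac{1}{2^{d+11}[W]_{A_{q,\infty}^{sc}}}$, Lemma~\ref{Lem:RHI} applies to each $w_j$ separately and gives $\frac1{|Q|}\int_Q w_j^{\,r}\le\bigl(\frac2{|Q|}\int_Q w_j\bigr)^{r}$ for every cube $Q$.

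Finally I would assemble the pieces. Using $r>1$, elementary inequalities for sums of finitely many nonnegative reals (so that $(\sum_j a_j)^r\simeq_n\sum_j a_j^r$ and $\sum_j a_j^r\le(\sum_j a_j)^r$), the comparison above, and the termwise reverse Hölder bound, integration of $x$ over $Q$ yields
\[
\frac1{|Q|}\int_Q\norm{W^{\tfrac1q}(x)W^{-\tfrac1q}(y)}^{rq}dx\simeq_{n,q}\sum_{j}\frac1{|Q|}\int_Q w_j^{\,r}\lesssim_{n,q}\sum_j\Big(\frac1{|Q|}\int_Q w_j\Big)^{\!r}\le\Big(\frac1{|Q|}\int_Q\sum_j w_j\Big)^{\!r}\simeq_{n,q}\Big(\frac1{|Q|}\int_Q\norm{W^{\tfrac1q}(x)W^{-\tfrac1q}(y)}^{q}dx\Big)^{\!r}.
\]
Taking the $(rp)$-th root and combining with the pointwise bound from the first step,
\[
\Big(\frac1{|Q|}\int_Q\norm{W^{\tfrac1p}(x)W^{-\tfrac1p}(y)}^{rp}dx\Big)^{\frac1{rp}}\le\Big[\Big(\frac1{|Q|}\int_Q\norm{W^{\tfrac1q}(x)W^{-\tfrac1q}(y)}^{rq}dx\Big)^{\frac1r}\Big]^{\frac1p}\lesssim_{n,p,q}\Big(\frac1{|Q|}\int_Q\norm{W^{\tfrac1q}(x)W^{-\tfrac1q}(y)}^{q}dx\Big)^{\frac1p},
\]
which is the asserted estimate.

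The step I expect to be most delicate is the constant bookkeeping in the passage from the operator norm to the scalars $w_j$: one is \emph{not} allowed to feed the non-scalar quantity $\norm{W^{1/q}(\cdot)W^{-1/q}(y)}^q$ directly into Lemma~\ref{Lem:RHI}, because that function is only comparable, with a dimensional constant strictly larger than $1$, to the $A_\infty$ weight $\sum_j w_j$, so its $A_\infty$ characteristic acquires a factor $c_n^2$ and the admissible range of $\delta$ in Lemma~\ref{Lem:RHI} shrinks below $\tfrac1{2^{d+11}[W]_{A_{q,\infty}^{sc}}}$. Splitting into the scalar pieces $w_j$ \emph{before} invoking reverse Hölder, and letting all dimensional constants accumulate outside the exponents, is exactly what makes the choice of $r$ in the statement admissible. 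Everything else (operator monotonicity, Hilbert--Schmidt comparison, finite-sum manipulations) is routine.
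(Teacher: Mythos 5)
Your proof is correct and follows essentially the same route as the paper's: a pointwise domination of $\|W^{1/p}(x)W^{-1/p}(y)\|$ by $\|W^{1/q}(x)W^{-1/q}(y)\|^{q/p}$ (the paper gets this from the H\"older--McCarthy inequality applied to eigenvectors of $W(y)$, you from L\"owner--Heinz, which are interchangeable here), followed by the scalar reverse H\"older inequality applied to the individual $A_\infty$ weights $|W^{1/q}(\cdot)v|^q$ rather than to the operator norm itself. The only cosmetic difference is your use of the Hilbert--Schmidt norm versus the paper's sum of vector evaluations over a basis; the constants and the role of $[W]_{A_{q,\infty}^{sc}}$ in fixing $r$ are handled identically.
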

\begin{proof}
Choosing ${  \vec{e}_j}(y)$ an orthonormal basis of eigenvalues $\lambda_{j}(y)$
of $W(y)$, we have by the {  classical H\"{o}lder-McCarthy inequality} (see \cite[Lemma 2.1]{B}) that
\[
\begin{split}\|W^{\frac{1}{p}}(x)W^{-\frac{1}{p}}(y)\| & \lesssim\sum_{j=1}^{n}\left|W^{\frac{1}{p}}(x)W^{-\frac{1}{p}}(y){  \vec{e}_j}(y)\right|=\sum_{j=1}^{n}\lambda_{j}(y)^{-\frac{1}{p}}\left|W^{\frac{1}{p}}(x){  \vec{e}_j}(y)\right|\\
 & \leq\sum_{j=1}^{n}\lambda_{j}(y)^{-\frac{1}{p}}\left|W^{\frac{1}{q}}(x){  \vec{e}_j}(y)\right|^{\frac{q}{p}}=\sum_{j=1}^{n}\left|W^{\frac{1}{q}}(x)\lambda_{j}(y)^{-\frac{1}{q}}{  \vec{e}_j}(y)\right|^{\frac{q}{p}}\\
 & =\sum_{j=1}^{n}\left|W^{\frac{1}{q}}(x)W^{-\frac{1}{q}}(y){  \vec{e}_j}(y)\right|^{\frac{q}{p}}\lesssim\left\Vert W^{\frac{1}{q}}(x)W^{-\frac{1}{q}}(y)\right\Vert ^{\frac{q}{p}}
\end{split}
\]
Hence
\[
\left(\frac{1}{|Q|}\int_{Q}\|W^{\frac{1}{p}}(x)W^{-\frac{1}{p}}(y)\|^{rp}dx\right)^{\frac{1}{rp}}\lesssim\left(\frac{1}{|Q|}\int_{Q}\|W^{\frac{1}{q}}(x)W^{-\frac{1}{q}}(y)\|^{qr}dx\right)^{\frac{1}{rp}}
\]
\end{proof}
Now, since $r=1+\frac{1}{2^{d+11}[W]_{A_{q,\infty}^{sc}}}$, taking
account that $W\in A_{q}\subset A_{q,\infty}^{sc}$, by reverse Hölder
inequality we have that {  choosing} any basis $\{{  \vec{e}_i}
\}_{i=1}^{n}$ of
${  \mathbb{C}^{n}}$,

\[
\begin{split}\left(\frac{1}{|Q|}\int_{Q}\|W^{\frac{1}{q}}(x)W^{-\frac{1}{q}}(y)\|^{qr}dx\right)^{\frac{1}{rp}} & \lesssim\sum_{j=1}^{n}\left(\frac{1}{|Q|}\int_{Q}\left|W^{\frac{1}{q}}(x)W^{-\frac{1}{q}}(y){  \vec{e}_i}
\right|^{qr}dx\right)^{\frac{1}{rp}}\\
 & \leq\sum_{j=1}^{n}\left(\frac{1}{|Q|}\int_{Q}\left|W^{\frac{1}{q}}(x)W^{-\frac{1}{q}}(y){  \vec{e}_i}
\right|^{q}dx\right)^{\frac{1}{p}}\\
 & \lesssim\left(\frac{1}{|Q|}\int_{Q}\|W^{\frac{1}{q}}(x)W^{-\frac{1}{q}}(y)\|^{q}dx\right)^{\frac{1}{p}}
\end{split}
\]
and we are done.

\section{Proofs of $A_{1}$ and $A_{q}$ estimates\label{sec:Proofs of A1Estimates}}

\subsection{Proof of Theorems \ref{Thm:A1Mat} and \ref{Thm:AqMat} for $M'_{W,p}$
and $M_{W,p}$}

\subsubsection{Estimates for $M'_{W,p}$}

First we deal with \eqref{eq:A1MprimaW} and \eqref{eq:AqMprimaW}. We proceed
as follows. Notice that
\[
\begin{split}\frac{1}{|Q|}\int_{Q}\left|\mathcal{W}_{Q,p}W^{-\frac{1}{p}}(y)\vec{f}(y)\right|dy & \leq\frac{1}{|Q|}\int_{Q}\left\Vert \mathcal{W}_{Q,p}W^{-\frac{1}{p}}(y)\right\Vert \left|\vec{f}(y)\right|dy\\
 & {  \lesssim} \frac{1}{|Q|}\int_{Q}\left(\frac{1}{|Q|}\int_{Q}\left\Vert W^{\frac{1}{p}}(z)W^{-\frac{1}{p}}(y)\right\Vert ^{p}dz\right)^{\frac{1}{p}}\left|\vec{f}(y)\right|dy.
\end{split}
\]
If $q=1$ it suffices to use Lemma \ref{Lem:Key} and the definition
of $A_{1}$ weight to see that
\[
\left(\frac{1}{|Q|}\int_{Q}\left\Vert W^{\frac{1}{p}}(z)W^{-\frac{1}{p}}(y)\right\Vert ^{p}dz\right)^{\frac{1}{p}}\leq c_{n,d}[W]_{A_{1}}^{\frac{1}{p}}.
\]
In that case
\[
\frac{1}{|Q|}\int_{Q}\left|\mathcal{W}_{Q,p}W^{-\frac{1}{p}}(y)\vec{f}(y)\right|dy\leq c_{n,d}[W]_{A_{1}}^{\frac{1}{p}}\frac{1}{|Q|}\int_{Q}|\vec{f}(y)|dy
\]
and using the strong type $(p,p)$ for the scalar maximal function
we are done.

If $q>1$,
\[
\begin{split} & \frac{1}{|Q|}\int_{Q}\left(\frac{1}{|Q|}\int_{Q}\left\Vert W^{\frac{1}{p}}(z)W^{-\frac{1}{p}}(y)\right\Vert ^{p}dz\right)^{\frac{1}{p}}\left|\vec{f}(y)\right|dy\\
 & \leq\left(\frac{1}{|Q|}\int_{Q}\left(\frac{1}{|Q|}\int_{Q}\left\Vert W^{\frac{1}{p}}(z)W^{-\frac{1}{p}}(y)\right\Vert ^{p}dz\right)^{\frac{q'}{p}}\right)^{\frac{1}{q'}}\left(\frac{1}{|Q|}\int_{Q}\left|\vec{f}(y)\right|^{q}dy\right)^{\frac{1}{q}}.
\end{split}
\]
Now we notice that taking into account Lemma \ref{Lem:Key},
\[
{  \left(\frac{1}{|Q|}\int_{Q}\left(\frac{1}{|Q|}\int_{Q} \norm{W^{\frac{1}{p}}(z)W^{-\frac{1}{p}}(y)}^{p}dz\right)^{\frac{q'}{p}}\right)^{\frac{1}{q'}}}\leq{  \left(\frac{1}{|Q|}\int_{Q}\left(\frac{1}{|Q|}\int_{Q} \norm{W^{\frac{1}{q}}(z)W^{-\frac{1}{q}}(y)}^{q}dz\right)^{\frac{q'}{p}}\right)^{\frac{1}{q'}}}
\]
To {  end} the estimate, observe that if we call $V=W^{-\frac{1}{q-1}}$

\begin{equation}
\begin{split} & \left[\frac{1}{|Q|}\int_{Q}\left(\frac{1}{|Q|}\int_{Q}\|W^{\frac{1}{q}}(x)W^{-\frac{1}{q}}(y)\|^{q}dx\right)^{\frac{1}{p}q'}dy\right]^{\frac{1}{q'}}\\
 & =\left[\frac{1}{|Q|}\int_{Q}\left(\frac{1}{|Q|}\int_{Q}\|V^{-\frac{1}{q'}}(x)V^{\frac{1}{q'}}(y)\|^{q}dx\right)^{\frac{q}{p}\frac{q'}{q}}dy\right]^{\frac{1}{q'}}\\
 & \leq\left[\frac{1}{|Q|}\int_{Q}\left(\frac{1}{|Q|}\int_{Q}\|V^{-\frac{1}{q'}}(x)V^{\frac{1}{q'}}(y)\|^{q}dx\right)^{\frac{q'}{q}}dy\right]^{\frac{q}{q'p}}\\
 & =\left(\left[V\right]_{A_{q'}}^{q-1}\right)^{\frac{1}{p}}\simeq\left[W\right]_{A_{q}}^{\frac{1}{p}}.
\end{split}
\label{eq:Aq}
\end{equation}
where the last step is a direct application of Proposition \ref{Prop:AqAqprime}.
Then
\[
\frac{1}{|Q|}\int_{Q}\left|\mathcal{W}_{Q,p}W^{-\frac{1}{p}}(y)\vec{f}(y)\right|\leq c_{n,d}[W]_{A_{q}}^{\frac{1}{p}}\left(\frac{1}{|Q|}\int_{Q}\left|\vec{f}(y)\right|^{q}dy\right)^{\frac{1}{q}}
\]
and using the strong type $(p,p)$ for the scalar operator $M_{q}(f)=M(|f|^{q})^{\frac{1}{q}}$,
we are done.

\subsubsection{Estimates for $M_{W,p}$}

We are going to settle \eqref{eq:A1MW} and \eqref{eq:AqMW} {  at}  the same time. First we note that {  by the proof of Lemma \ref{Lem:Key}}
 we have that
\begin{equation}
\|\MC{W}_{Q,q}^{\frac{q}{p}}W^{-\frac{1}{p}}(y)\|\lesssim\|\MC{W}_{Q,q}W^{-\frac{1}{q}}(y)\|^{\frac{q}{p}}\label{HM}
\end{equation}
{  for any $Q \in \D$.  Fix $J \in \D$.}  Let $\mathcal{J}(J)$ denote the maximal cubes $L\in\mathcal{D}(J)$
(if any exist) where
\begin{equation}
\innp{|\MC{W}_{J,q}^{\frac{q}{p}}W^{-\frac{1}{p}}\V{f}|}_{L}>4\innp{|\MC{W}_{J,q}^{\frac{q}{p}}W^{-\frac{1}{p}}\V{f}|}_{J}. \label{StopCrit}
\end{equation}
By maximality, {  as usual,}  we have
\begin{align*}
\sum_{L\in\mathcal{J}(J)}|L| & \leq\frac{1}{2\innp{|\MC{W}_{J,q}^{\frac{q}{p}}W^{-\frac{1}{p}}\V{f}|}_{J}}\sum_{L\in\mathcal{J}(J)}\int_{L}|\MC{W}_{J,q}^{\frac{q}{p}}W^{-\frac{1}{p}}(y)\V{f}(y)|\,dy\\
 & \leq\frac{1}{2\innp{|\MC{W}_{J,q}^{\frac{q}{p}}W^{-\frac{1}{p}}\V{f}|}_{J}}\int_{J}|\MC{W}_{J,q}^{\frac{q}{p}}W^{-\frac{1}{p}}(y)\V{f}(y)|\,dy\\
 & =\frac{|J|}{{ 4}}.
\end{align*}

Now let $\MC{F}(J)$ be the collection of cubes in $\mathcal{D}(J)$
that are not a subset of any cube $I\in\mathcal{J}(J)$. Furthermore,
for ease of notation let $\cup\mathcal{J}(J)=\cup_{L\in\mathcal{J}(J)}L$.
Let
\begin{align*}
M_{J,W}\V{f}(x)=\sup_{\substack{Q\ni x\\
Q\in\mathcal{D}(J)
}
}\dashint_{Q}|W^{\frac{1}{p}}(x)W^{-\frac{1}{p}}(y)\V{f}(y)|\,dy
\end{align*}

We pointwise dominate $M_{J,W}\V{f}(x)$ by looking at three cases.
First, assume $Q\in\mathcal{F}(J)$ and assume $x\in\cup\mathcal{J}(J)$.
Thus, let $x\in Q\in\mathcal{F}(J)$ and $x\in I\in\mathcal{J}(J)$.
Then by definition of $\mathcal{F}(J)$ we must have $I\subsetneq Q\subseteq J$
so that in this case,  \eqref{HM} and \eqref{StopCrit} gives us

\begin{align*}
 & \dashint_{Q}|W^{\frac{1}{p}}(x)W^{-\frac{1}{p}}(y)\V{f}(y)|\,dy\leq\sup_{I\in\mathcal{J}(J)}\sup_{J\supseteq Q\varsupsetneq I\ni x}\dashint_{Q}|W^{\frac{1}{p}}(x)W^{-\frac{1}{p}}(y)\V{f}(y)|\,dy\\
 & \leq\|W^{\frac{1}{p}}(x)\W_{J, q}^{-\frac{q}{p}}\|\sup_{I\in\mathcal{J}(J)}\sup_{J\supseteq Q\varsupsetneq I}\dashint_{Q}|{ { \W_{J,q}}}^{\frac{q}{p}}W^{-\frac{1}{p}}(y)\V{f}(y)|\,dy\\
 & \leq{ 4}\|W^{\frac{1}{p}}(x){ \W_{J, q}}^{-\frac{q}{p}}\|\dashint_{J}|\W_{J,q}^{\frac{q}{p}}W^{-\frac{1}{p}}(y)\V{f}(y)|\,dy\\
 & \leq{ 4}\|W^{\frac{1}{p}}(x){ \W_{J, q}}^{-\frac{q}{p}}\|\dashint_{J}\|\W_{J,q}^{\frac{q}{p}}W^{-\frac{1}{p}}(y)\||\V{f}(y)|\,dy\\
 & \leq{ 4}\|W^{\frac{1}{q}}(x){ \W_{J, q}}^{-1}\|^{\frac{q}{p}}\dashint_{J}\|\W_{J,q}W^{-\frac{1}{q}}(y)\|^{\frac{q}{p}}|\V{f}(y)|\,dy\\
 & =A
\end{align*}
at this point if $q=1$ we have that
\[
\begin{split}A & \leq { 4}\|W(x)\W_{J,1}^{-1}\|^{\frac{1}{p}}\dashint_{J}\|\W_{J,1}W^{-1}(y)\|^{\frac{1}{p}}|\V{f}(y)|\,dy\\
 & \leq{  4 c_n} [W]_{\text{A}_{1}}^{\frac{1}{p}}\|W(x)\W_{J,1}^{-1}\|^{\frac{1}{p}}\innp{|\V{f}|}_{J}
\end{split}
\]
and if $1<q<\infty$,
\[
\begin{split}A & \leq{  4} c_{n}\|W^{\frac{1}{q}}(x)\W_{J,q}^{-1}\|^{\frac{q}{p}}\dashint_{J}\left(\dashint_{J}\|W^{{ \frac{1}{q}}}(x)W^{-\frac{1}{q}}(y)\|^{q}dx\right)^{\frac{1}{q}\frac{q}{p}}|\V{f}(y)|\,dy\\
 & \leq{ 4}c_{n}\|W^{\frac{1}{q}}(x)\W_{J,q}^{-1}\|^{\frac{q}{p}}\left(\dashint_{J}\left(\dashint_{J}\|W^{ \frac{1}{q}}(x)W^{-\frac{1}{q}}(y)\|^{q}dx\right)^{\frac{q'}{p}}dy\right)^{\frac{1}{q'}}\left(\dashint_{J}|\V{f}(y)|^{q}\,dy\right)^{\frac{1}{q}}\\
 & \leq{ 4}c_{n}\|W^{\frac{1}{q}}(x)\W_{J,q}^{-1}\|^{\frac{q}{p}}[W]_{A_{q}}^{\frac{1}{p}}\left(\dashint_{J}|\V{f}(y)|^{q}\,dy\right)^{\frac{1}{q}}.
\end{split}
\]
Next, assume $Q\in\mathcal{F}(J)$ and $x\not\in\cup\mathcal{J}(J)$.
Pick a sequence $L_{k}^{x}$ of nested dyadic cubes where
\[
\{L_{k}^{x}\}=\{L\in\mathcal{F}(J):x\in L\}=\{L\in\mathcal{D}(J):x\in L\}.
\]

But if
\[
\sup_{k}\innp{|\MC{W}_{J,q}^{\frac{q}{p}}W^{-\frac{1}{p}}\V{f}|}_{L_{k}^{x}}>{ 4}\innp{|\MC{W}_{J,q}^{\frac{q}{p}}W^{-\frac{1}{p}}\V{f}|}_{J}
\]
then for some $k$ we have
\[
\innp{|\MC{W}_{J,q}^{\frac{q}{p}}W^{-\frac{1}{p}}\V{f}|}_{L_{k}^{x}}>{ 4}\innp{|\MC{W}_{J,q}^{\frac{q}{p}}W^{-\frac{1}{p}}\V{f}|}_{J}
\]
which means that $x\in L_{k}^{x}\subseteq Q$ for some $Q\in\mathcal{J}(J)$.
Thus, bearing the computation above in mind,
\begin{align*}
\dashint_{Q}|W^{\frac{1}{p}}(x)W^{-\frac{1}{p}}(y)\V{f}(y)|\,dy & \leq\sup_{k}\dashint_{L_{k}^{x}}|W^{\frac{1}{p}}(x)W^{-\frac{1}{p}}(y)\V{f}(y)|\,dy\\
 & \leq\|W^{\frac{1}{p}}(x)\W_{J,q}^{-\frac{q}{p}}\|\sup_{k}\innp{|\MC{W}_{J,q}^{\frac{q}{p}}W^{-\frac{1}{p}}\V{f}|}_{L_{k}^{x}}\\
 & \leq{ 4}\|W^{\frac{1}{p}}(x)\W_{J,q}^{-\frac{q}{p}}\|\innp{|\MC{W}_{J,q}^{\frac{q}{p}}W^{-\frac{1}{p}}\V{f}|}_{J}\\
 & \leq{ 4}c_{n}\|W^{\frac{1}{q}}(x)\W_{J,q}^{-1}\|^{\frac{q}{p}}[W]_{A_{q}}^{\frac{1}{p}}\left(\dashint_{J}|\V{f}(y)|^{q}\,dy\right)^{\frac{1}{q}}
\end{align*}
in the case $1<q<\infty$ and
\[
\dashint_{Q}|W^{\frac{1}{p}}(x)W^{-\frac{1}{p}}(y)\V{f}(y)|\,dy\lesssim[W]_{\text{A}_{1}}^{\frac{1}{p}}\|W(x)\W_{J,1}^{-1}\|^{\frac{1}{p}}\innp{|\V{f}|}_{J}
\]
in the case $q=1$.

Lastly, if $Q\not\in\mathcal{F}(J)$ then $Q\subseteq L$ for some
$L\in\mathcal{J}(J)$ so obviously if $x\in Q$ then $x\in\cup\mathcal{J}(J)$.
Combining all this gives
\[
M_{J,W}\V{f}(x)\leq\max\left\{ { 4c_{n} \chi_{J \backslash \bigcup\MC{J}(J)} (x)}\|W^{\frac{1}{q}}(x)\W_{J,q}^{-1}\|^{\frac{q}{p}}[W]_{A_{q}}^{\frac{1}{p}}\left(\dashint_{J}|\V{f}(y)|^{q}\,dy\right)^{\frac{1}{q}},\ \chi_{\cup\mathcal{J}(J)}(x)\sup_{L\in\mathcal{J}(J)}M_{L,W}\V{f}(x)\right\}
\]
Thus, for $1\leq q<\infty,$
\[
\begin{split} & \int_{J}(M_{J,W}\V{f}(x))^{p}\,dx=({ 4}c_{n})^{p}[W]_{\text{A}_{q}}\innp{|\V{f}|}_{J,q}^{p}\int_{J}\|W^{\frac{1}{q}}(x)\W_{J,q}^{-1}\|^{{ {q}}}+\sum_{L\in\mathcal{J}(J)}\int_{L}(M_{L,W}\V{f}(x))^{p}\,dx\\
 & =({ 4}c_{n})^{p}[W]_{\text{A}_{q}}\innp{|\V{f}|}_{J,q}^{p}|J|{ \left(\dashint_{J}\|W^{\frac{1}{q}}(x)\W_{J,q}^{-1}\|^{q}\right)}+\sum_{L\in\mathcal{J}(J)}\int_{L}(M_{L,W}\V{f}(x))^{p}\,dx\\
 & \lesssim({ 4}c_{n})^{p}[W]_{\text{A}_{q}}\innp{|\V{f}|}_{J,q}^{p}|J|+\sum_{L\in\mathcal{J}(J)}\int_{L}(M_{L,W}\V{f}(x))^{p}\,dx
\end{split}
\]
If as usual $\mathcal{J}_{k}(J)=\{L\in\mathcal{J}(Q):Q\in\mathcal{J}_{k-1}(J)\}$
with $\mathcal{J}_{0}(J)=\{J\}$ and $\mathcal{S}=\cup_{k}\mathcal{J}_{k}(J)$
then $\mathcal{S}$ is sparse and iteration gives us
\begin{align*}
\int_{J}(M_{J,W}\V{f}(x))^{p}\,dx & \lesssim[W]_{\text{A}_{q}}\sum_{L\in\mathcal{S}}|L|\inf_{x\in L}(M_{q}(|\V{f}|)(x))^{p}\\
 & \lesssim[W]_{\text{A}_{q}}\sum_{L\in\mathcal{S}}\int_{L}(M_{q}(|\V{f}|)(x))^{p}\,dx\\
 & \lesssim[W]_{\text{A}_{q}}\sum_{L\in\mathcal{S}}\int_{E_{L}}(M_{q}(|\V{f}|)(x))^{p}\,dx\\
 & \lesssim[W]_{\text{A}_{q}}\|M_{q}(|\V{f}|)\|_{L^{p}}^{p}\\
 & \lesssim[W]_{\text{A}_{q}}\|\V{f}\|_{L^{p}}^{p}.
\end{align*}

\subsection{Proof of Theorems \ref{Thm:A1Mat} and \ref{Thm:AqMat} for singular
operators}

\subsubsection{A reduction to bump conditions}

We recall that $A:[0,\infty)\rightarrow[0,\infty)$ is Young function
if $A(0)=0$ and it is a convex and increasing function. Given a function
$f$ and a measurable set $E$ with finite measure, we can define
the average on $E$ of $f$ associated to $A$ by
\[
\|f\|_{A,E}=\inf\left\{ \lambda>0\,:\,\frac{1}{|E|}\int_{E}A\left(\frac{|f|}{\lambda}\right)\leq1\right\} .
\]
From that definition it readily follows that if
\[
\lambda_{1}\leq\|f\|_{A,Q}\leq\lambda_{2}
\]
then
\begin{equation}
\frac{1}{|Q|}\int_{Q}A\left(\frac{|f|}{\lambda_{2}}\right)\leq1\quad\text{and}\quad\frac{1}{|Q|}\int_{Q}A\left(\frac{|f|}{\lambda_{1}}\right)\geq1.\label{eq:Cont}
\end{equation}
Given a Young function $A$ it is natural to define a maximal operator
$M_{A}$ hinging upon the preceding definition of average as follows
\[
M_{A}f(x)=\sup_{Q\ni x}\|f\|_{A,Q}.
\]
The boundedness of those operators on $L^{p}$ spaces was thoroughly
studied by C. Pérez \cite{PMax}, under the aditional condition that
$A$ is doubling, assumption that was proved to be superfluous by
Liu and Luque \cite{LL}. The condition is the following
\begin{equation}
\|M_{A}\|_{L^{p}\rightarrow L^{p}}\leq {  c_{d}}\left(\int_{1}^{\infty}\frac{A(t)}{t^{p}}\frac{dt}{t}\right)^{\frac{1}{p}}\label{eq:ConstMA}
\end{equation}
Associated to each Young function we can define the so called associated
Young function $\overline{A}$ by
\[
\overline{A}(t)=\sup_{s>0}\{st-A(s)\}.
\]
That function has some interesting properties. The first of them is
that
\[
t\leq A^{-1}(t)\overline{A}^{-1}(t)\leq2t.
\]
The second one, that will be very interesting for us, is the following
generalized Hölder inequality
\[
\frac{1}{|Q|}\int_{Q}|fg|\leq2\|f\|_{A,Q}\|g\|_{\overline{A},Q}.
\]
For more details about Young functions we remit the reader to \cite{O,RR}.

Let $T$ is a Calderón-Zygmund operator and $W,V$ be matrix weights. If we call

\[
T_{\mathcal{S}}^{W,V}\phi(x)=\sum_{Q\in\mathcal{S}}\frac{1}{|Q|}\int_{Q}\left\Vert W^{\frac{1}{p}}(x)V^{-\frac{1}{p}}(y)\right\Vert \phi(y)dy\chi_{Q}(x)
\]
and \[
\begin{split}[b,T]_{\mathcal{S}}^{W,V}\phi(x) & =\sum_{Q\in\mathcal{S}}\frac{1}{|Q|}|b(x)-\langle b\rangle_{Q}|\int_{Q}\left\Vert W^{\frac{1}{p}}(x)V^{-\frac{1}{p}}(y)\right\Vert \phi(y)dy\chi_{Q}(x)\\
 & +\sum_{Q\in\mathcal{S}}\frac{1}{|Q|}\int_{Q}|b(y)-\langle b\rangle_{Q}|\left\Vert W^{\frac{1}{p}}(x)V^{-\frac{1}{p}}(y)\right\Vert \phi(y)dy\chi_{Q}(x)
\end{split}
\]
then Theorems \ref{Thm:ConvexBody} and  \ref{Thm:ConvexBodyComm} immediately give us that
\[
\|T\|_{L^{p}(V)\rightarrow L^{p}(W)}\lesssim\sup_{\mathcal{S}}\|T_{\mathcal{S}}^{W,V}\|_{L^{p}(\mathbb{R}^{d})\rightarrow L^{p}(\mathbb{R}^{d})}
\]
 and \[
\|[b,T]\|_{L^{p}(V)\rightarrow L^{p}(W)}\lesssim\sup_{\mathcal{S}}\|[b,T]_{\mathcal{S}}^{W,V}\|_{L^{p}(\mathbb{R}^{d})\rightarrow L^{p}(\mathbb{R}^{d})}.
\]

Armed with the preceding definitions and results and arguing in the
spirit of \cite{CUIM} we can prove a lemma that will be fundamental
for our purposes.
\begin{lem}
\label{Lem:Bumps}Let $A,B$ be Young functions. Then
\[
\|T_{\mathcal{S}}^{W,V}\|_{{  L^{p}(\mathbb{R}^{d}; \Cn)\rightarrow L^{p}(\mathbb{R}^{d}; \Cn)}} { \lesssim}\|M_{\overline{A}}\|_{L^{p'}}\|M_{\overline{B}}\|_{L^{p}}\min\{\kappa_{1},\kappa_{2}\}
\]
where $\kappa_{1}=\sup_{Q}\|\|\|W^{\frac{1}{p}}(x)V^{-\frac{1}{p}}(y)\|\|_{A_{x},Q}\|_{B_{y},Q}$
and $\kappa_{2}=\sup_{Q}\|\|\|W^{\frac{1}{p}}(x)V^{-\frac{1}{p}}(y)\|\|_{B_{y},Q}\|_{A_{x},Q}$.
\end{lem}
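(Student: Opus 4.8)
The plan is to use duality together with two successive applications of the generalized Hölder inequality for Orlicz averages, and then to exploit the sparseness of $\mathcal{S}$ exactly as in the classical two-weight ``bump'' scheme. Since $T_{\mathcal{S}}^{W,V}$ is a positive operator, the triangle inequality reduces matters to proving the stated bound for a nonnegative (scalar) $\phi$, and by duality it then suffices to estimate $\int_{\mathbb{R}^{d}}T_{\mathcal{S}}^{W,V}\phi\cdot g$ for nonnegative $g$ with $\|g\|_{L^{p'}}=1$. Expanding the definition and using Fubini, the term associated with a fixed $Q\in\mathcal{S}$ equals
\[
|Q|\,\dashint_{Q}\phi(y)\left(\dashint_{Q}g(x)\left\Vert W^{\frac1p}(x)V^{-\frac1p}(y)\right\Vert dx\right)dy .
\]

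The heart of the matter is to estimate this double average. For the version with $\kappa_{1}$, fix $y$ and apply the generalized Hölder inequality in the variable $x$ to the pair $g$ and $\|W^{\frac1p}(\cdot)V^{-\frac1p}(y)\|$ with the Young functions $\overline{A}$ and $A$; this extracts $\|g\|_{\overline{A},Q}$ and leaves the average in $y$ of $\phi(y)$ against $y\mapsto\|\,\|W^{\frac1p}(\cdot)V^{-\frac1p}(y)\|\,\|_{A_{x},Q}$, to which we apply the generalized Hölder inequality in $y$ with $\overline{B}$ and $B$. Restoring the factor $|Q|$, the $Q$-th term is bounded by $4\kappa_{1}\,|Q|\,\|\phi\|_{\overline{B},Q}\,\|g\|_{\overline{A},Q}$. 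Performing the two steps in the opposite order — Hölder in $y$ first (with $\overline{B},B$), then in $x$ (with $\overline{A},A$) — yields the same quantity with $\kappa_{2}$ in place of $\kappa_{1}$; taking the better of the two is the source of the $\min\{\kappa_{1},\kappa_{2}\}$. Summing over $Q\in\mathcal{S}$ gives
\[
\int_{\mathbb{R}^{d}}T_{\mathcal{S}}^{W,V}\phi\cdot g\;\lesssim\;\min\{\kappa_{1},\kappa_{2}\}\sum_{Q\in\mathcal{S}}|Q|\,\|\phi\|_{\overline{B},Q}\,\|g\|_{\overline{A},Q}.
\]

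Finally I invoke sparseness. Choosing the pairwise disjoint sets $E_{Q}\subseteq Q$ with $|Q|\lesssim|E_{Q}|$, and using $\|\phi\|_{\overline{B},Q}\le M_{\overline{B}}\phi(x)$ and $\|g\|_{\overline{A},Q}\le M_{\overline{A}}g(x)$ for every $x\in Q$ (hence for $x\in E_{Q}$), the sum above is at most $\sum_{Q}\int_{E_{Q}}M_{\overline{B}}\phi(x)\,M_{\overline{A}}g(x)\,dx\le\int_{\mathbb{R}^{d}}M_{\overline{B}}\phi\cdot M_{\overline{A}}g$. One application of Hölder's inequality in $x$ and the $L^{p}$-boundedness of $M_{\overline{B}}$ and the $L^{p'}$-boundedness of $M_{\overline{A}}$ then give the asserted bound $\|M_{\overline{B}}\|_{L^{p}}\|M_{\overline{A}}\|_{L^{p'}}\min\{\kappa_{1},\kappa_{2}\}\|\phi\|_{L^{p}}$.

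I do not expect a genuine obstacle: the scheme is entirely standard, and the only points requiring care are bookkeeping. The kernel $\|W^{\frac1p}(x)V^{-\frac1p}(y)\|$ depends on both variables, so the two generalized Hölder inequalities must be nested in a consistent order, and one must check that each of the two admissible nestings is legitimate — which is precisely what produces the two constants $\kappa_{1}$ and $\kappa_{2}$. The sparseness constant of $\mathcal{S}$ and the numerical factors coming from the generalized Hölder inequality are harmless and are absorbed into $\lesssim$.
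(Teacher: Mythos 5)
Your argument is correct and follows essentially the same route as the paper's proof: dualize, apply the generalized H\"older inequality twice (once in each variable, in either of the two admissible orders, which is exactly what produces $\min\{\kappa_1,\kappa_2\}$), then use sparseness to pass from $|Q|$ to $|E_Q|$ and bound the resulting sum by $\int M_{\overline{B}}\phi\cdot M_{\overline{A}}g$ and the $L^p$, $L^{p'}$ bounds for the Orlicz maximal operators. No gaps.
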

\begin{proof}
Without loss of generality we may assume that $f,g\geq0$. Then taking
into account generalized Hölder inequality

\begingroup
\allowdisplaybreaks

$\begin{aligned} & \sum_{Q\in\mathcal{S}}\frac{1}{|Q|}\int_{Q}\int_{Q}\|W^{\frac{1}{p}}(x)V^{-\frac{1}{p}}(y)\|f(y)g(x)dydx\\
 & =\sum_{Q\in\mathcal{S}}\frac{1}{|Q|}\int_{Q}\int_{Q}\|W^{\frac{1}{p}}(x)V^{-\frac{1}{p}}(y)\|g(x)f(y)dxdy\\
 & \leq2\sum_{Q\in\mathcal{S}}\|g\|_{\overline{A},Q}\int_{Q}\|\|W^{\frac{1}{p}}(x)V^{-\frac{1}{p}}(y)\|\|_{A_{x},Q}f(y)dy\leq\\
 & \leq c\sum_{Q\in\mathcal{S}}\|f\|_{\overline{B},Q}\|g\|_{\overline{A},Q}|E_{Q}|\|\|\|W^{\frac{1}{p}}(x)V^{-\frac{1}{p}}(y)\|\|_{A_{x},Q}\|_{B_{y},Q}\\
 & \leq c\sup_{S}\|\|\|W^{\frac{1}{p}}(x)V^{-\frac{1}{p}}(y)\|\|_{A_{x},Q}\|_{B_{y},S}\sum_{Q\in\mathcal{S}}\|f\|_{\overline{B},Q}\|g\|_{\overline{A},Q}|E_{Q}|\\
 & \leq c\|M_{\overline{A}}\|_{L^{p'}}\|M_{\overline{B}}\|_{L^{p}}\sup_{Q}\|\|\|W^{\frac{1}{p}}(x)V^{-\frac{1}{p}}(y)\|\|_{A_{x},Q}\|_{B_{y},Q}\|f\|_{ { L^{p}(\mathbb{R}^{d})}}\|g\|_{ { L^{p'}(\mathbb{R}^{d})}}.
\end{aligned}
$
\endgroup

The other estimate is obtained arguing analogously.
\end{proof}
In the case of commutators we can provide the following counterpart
\begin{lem}
\label{Lem:BumpsComm}Let $A,B,C,D$ be Young functions. Then
\[
\|[b,T]_{\mathcal{S}}^{W,V}\|_{{  L^{p}(\mathbb{R}^{d}; \Cn)\rightarrow L^{p}(\mathbb{R}^{d}; \Cn)}}{ \lesssim}(\Lambda_{1}+\Lambda_{2})
\]
where $\Lambda_{1}=\|M_{\overline{A}}\|_{L^{p'}}\|M_{\overline{B}}\|_{L^{p}}\min\left\{ \kappa_{1},\kappa_{2}\right\} $
with
\[
\begin{split}\kappa_{1} & =\sup_{Q}\|\||b(x)-\langle b\rangle_{Q}|\|W^{\frac{1}{p}}(x)V^{-\frac{1}{p}}(y)\|\|_{A_{x},Q}\|_{B_{y},Q}\\
\kappa_{2} & =\sup_{S}\||b(x)-\langle b\rangle_{Q}|\|\|W^{\frac{1}{p}}(x)V^{-\frac{1}{p}}(y)\|\|_{B_{y},Q}\|_{A_{x},Q}
\end{split}
\]
 and $\Lambda_{2}=\|M_{\overline{C}}\|_{L^{p'}}\|M_{\overline{D}}\|_{L^{p}}\min\left\{ \kappa_{3},\kappa_{4}\right\} $
with
\[
\begin{split}\kappa_{3} & =\sup_{Q}\||b(y)-\langle b\rangle_{Q}|\|\|W^{\frac{1}{p}}(x)V^{-\frac{1}{p}}(y)\|\|_{C_{x},Q}\|_{D_{y},Q}\\
\kappa_{4} & =\sup_{Q}\|\||b(y)-\langle b\rangle_{Q}|\|W^{\frac{1}{p}}(x)V^{-\frac{1}{p}}(y)\|\|_{D_{y},Q}\|_{C_{x},Q}.
\end{split}
\]
\end{lem}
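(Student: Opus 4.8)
I would prove Lemma~\ref{Lem:BumpsComm} by imitating, term by term, the proof of Lemma~\ref{Lem:Bumps}. First I would split the sparse commutator form as $[b,T]_{\mathcal{S}}^{W,V}=I_{\mathcal{S}}+I\!I_{\mathcal{S}}$, where
\[
I_{\mathcal{S}}\phi(x)=\sum_{Q\in\mathcal{S}}\frac{|b(x)-\langle b\rangle_Q|}{|Q|}\int_Q\|W^{\frac1p}(x)V^{-\frac1p}(y)\|\,\phi(y)\,dy\,\chi_Q(x)
\]
carries $b(x)-\langle b\rangle_Q$ outside the integral and $I\!I_{\mathcal{S}}$ carries $b(y)-\langle b\rangle_Q$ inside. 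It suffices to prove $\|I_{\mathcal{S}}\|_{L^p\to L^p}\lesssim\Lambda_1$ and $\|I\!I_{\mathcal{S}}\|_{L^p\to L^p}\lesssim\Lambda_2$, and then add via the triangle inequality. As in Lemma~\ref{Lem:Bumps} we may assume $b\in L^\infty$ (otherwise the $\kappa_i$ are infinite and there is nothing to prove), so all averages below are finite.

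For $I_{\mathcal{S}}$ I would dualize and reduce to $f,g\ge0$ with $\|g\|_{L^{p'}}\le1$, so the task is to bound
\[
\sum_{Q\in\mathcal{S}}\frac{1}{|Q|}\int_Q\int_Q|b(x)-\langle b\rangle_Q|\,\|W^{\frac1p}(x)V^{-\frac1p}(y)\|\,f(y)g(x)\,dx\,dy .
\]
Viewing $|b(x)-\langle b\rangle_Q|\,\|W^{\frac1p}(x)V^{-\frac1p}(y)\|$ as a function of $x$ with $y$ frozen, the generalized H\"older inequality with the pair $(A,\overline A)$ in the $x$-variable pulls out $\|g\|_{\overline A,Q}$ and leaves $\||b(\cdot)-\langle b\rangle_Q|\,\|W^{\frac1p}(\cdot)V^{-\frac1p}(y)\|\|_{A_x,Q}$, now a function of $y$; a second application of generalized H\"older with $(B,\overline B)$ in the $y$-variable pulls out $\|f\|_{\overline B,Q}$ and leaves exactly $\kappa_1$. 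Performing the two steps in the opposite order (first $(B,\overline B)$ in $y$, then $(A,\overline A)$ in $x$) produces the same bound with $\kappa_2$ in place of $\kappa_1$, which accounts for the $\min\{\kappa_1,\kappa_2\}$. Hence the sum is $\lesssim\min\{\kappa_1,\kappa_2\}\sum_{Q\in\mathcal{S}}\|f\|_{\overline B,Q}\|g\|_{\overline A,Q}|E_Q|$, and by the sparseness of $\mathcal{S}$ (so $|Q|\lesssim|E_Q|$), the pairwise disjointness of the $E_Q$, the pointwise bounds $\|f\|_{\overline B,Q}\le M_{\overline B}f(z)$, $\|g\|_{\overline A,Q}\le M_{\overline A}g(z)$ for $z\in E_Q$, and H\"older in $L^p\times L^{p'}$, this is at most
\[
\min\{\kappa_1,\kappa_2\}\int_{\mathbb{R}^d}M_{\overline B}f\cdot M_{\overline A}g\,dx\le\|M_{\overline A}\|_{L^{p'}}\|M_{\overline B}\|_{L^p}\min\{\kappa_1,\kappa_2\}\,\|f\|_{L^p},
\]
which is $\Lambda_1$.

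The estimate for $I\!I_{\mathcal{S}}$ is structurally identical, with $(C,\overline C)$, $(D,\overline D)$ replacing $(A,\overline A)$, $(B,\overline B)$; the only change is that the bump $|b(y)-\langle b\rangle_Q|$ now lives in the $y$-variable, so pairing $\|W^{\frac1p}(x)V^{-\frac1p}(y)\|$ with $g(x)$ via $(C,\overline C)$ and then $|b(y)-\langle b\rangle_Q|\,\|\cdot\|_{C_x,Q}$ with $f(y)$ via $(D,\overline D)$ yields $\kappa_3$, while the reversed order yields $\kappa_4$, and the same sparse packing argument with $M_{\overline C},M_{\overline D}$ gives $\|I\!I_{\mathcal{S}}\|_{L^p\to L^p}\lesssim\Lambda_2$. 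I do not anticipate a genuine obstacle here: the only point needing a word of care is that the iterated Orlicz average $\|\,\|\cdot\|_{A_x,Q}\,\|_{B_y,Q}$ is well defined and that the two-variable generalized H\"older inequality may be applied one variable at a time, which is routine because $\|W^{\frac1p}(x)V^{-\frac1p}(y)\|$ and $|b-\langle b\rangle_Q|$ are jointly measurable and locally integrable on $Q\times Q$. Everything else is a repetition of the proof of Lemma~\ref{Lem:Bumps} applied to each of the two summands of $[b,T]_{\mathcal{S}}^{W,V}$.
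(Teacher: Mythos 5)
Your proposal is correct and follows essentially the same route as the paper: the paper likewise splits $[b,T]_{\mathcal{S}}^{W,V}$ into the two summands, dualizes with $f,g\ge 0$, applies the generalized H\"older inequality once in each variable (in either order, giving the $\min$), and then uses sparseness, the disjointness of the sets $E_Q$, the pointwise domination by $M_{\overline A}$, $M_{\overline B}$ (resp.\ $M_{\overline C}$, $M_{\overline D}$) and H\"older in $L^p\times L^{p'}$. The paper writes out only the $\kappa_1$ case and states that the remaining three estimates follow analogously, exactly as you describe.
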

\begin{proof}
We recall that
\[
\begin{split}[b,T]_{\mathcal{S}}^{W,V}\phi(x) & =\sum_{Q\in\mathcal{S}}\frac{1}{|Q|}|b(x)-\langle b\rangle_{Q}|\int_{Q}\left\Vert W^{\frac{1}{p}}(x)V^{-\frac{1}{p}}(y)\right\Vert \phi(y)dy\chi_{Q}(x)\\
 & +\sum_{Q\in\mathcal{S}}\frac{1}{|Q|}\int_{Q}|b(y)-\langle b\rangle_{Q}|\left\Vert W^{\frac{1}{p}}(x)V^{-\frac{1}{p}}(y)\right\Vert \phi(y)dy\chi_{Q}(x)
\end{split}
\]
Without loss of generality we may assume that $f,g\geq0$. For the
first term we can argue as follows

\begingroup
\allowdisplaybreaks

$\begin{aligned} & \sum_{Q\in\mathcal{S}}\frac{1}{|Q|}\int_{Q}|b(x)-\langle b\rangle_{Q}|\int_{Q}\|W^{\frac{1}{p}}(x)V^{-\frac{1}{p}}(y)\|f(y)g(x)dydx\\
 & =\sum_{Q\in\mathcal{S}}\frac{1}{|Q|}\int_{Q}\int_{Q}|b(x)-\langle b\rangle_{Q}|\|W^{\frac{1}{p}}(x)V^{-\frac{1}{p}}(y)\|g(x)f(y)dxdy\\
 & \leq2\sum_{Q\in\mathcal{S}}\|g\|_{\overline{A},Q}\int_{Q}\||b(x)-\langle b\rangle_{Q}|\|W^{\frac{1}{p}}(x)V^{-\frac{1}{p}}(y)\|\|_{A_{x},Q}f(y)dy\leq\\
 & \leq c\sum_{Q\in\mathcal{S}}\|f\|_{\overline{B},Q}\|g\|_{\overline{A},Q}|E_{Q}|\|\||b(x)-\langle b\rangle_{Q}|\|W^{\frac{1}{p}}(x)V^{-\frac{1}{p}}(y)\|\|_{A_{x},Q}\|_{B_{y},Q}\\
 & \leq c\sup_{S}\|\||b(x)-b_{Q}|\|W^{\frac{1}{p}}(x)V^{-\frac{1}{p}}(y)\|\|_{A_{x},Q}\|_{B_{y},S}\sum_{Q\in\mathcal{S}}\|f\|_{\overline{B},Q}\|g\|_{\overline{A},Q}|E_{Q}|\\
 & \leq c\|M_{\overline{A}}\|_{L^{p'}}\|M_{\overline{B}}\|_{L^{p}}\sup_{Q}\|\||b(x)-\langle b\rangle_{Q}|\|W^{\frac{1}{p}}(x)V^{-\frac{1}{p}}(y)\|\|_{A_{x},Q}\|_{B_{y},Q}\|f\|_{{  L^{p}(\mathbb{R}^{d})}\|g\|_{L^{p'}(\mathbb{R}^{d})}}.
\end{aligned}
$

\endgroup

Arguing analogously we obtain the rest of the estimates.
\end{proof}

\subsubsection{Proof of estimates \eqref{eq:A1Mat} and \eqref{eq:A1MatComm}}

Again we deal first with \eqref{eq:A1Mat}. We will use Lemma \ref{Lem:Bumps}.
Let us choose $\overline{B}(t)=t^{\frac{p+1}{2}}$ and $A(t)=t^{rp}$
with $r=1+\frac{1}{2^{d+11}[W]_{A_{1,\infty}^{sc}}}$.
We observe that $B(t)\simeq t^{\frac{p+1}{p-1}}$ . It's not hard
to check that ${ \|M_{\overline{B}}\|_{ {L^{p} \rightarrow L^p}}\leq c_{d}(p')^\frac{1}{p}} $ and that
${  \|M_{\overline{A}}\|_{L^{p'} \rightarrow L^{p'}}\leq c_{d} p^\frac{1}{p'} [W]_{A_{1,\infty}^{sc}}^{\frac{1}{p'}}}$
. We observe that using Lemma \ref{Lem:Key} and the definition of
$A_{1}$ weight,
\begin{equation}
\begin{split} & \|\|\|W^{\frac{1}{p}}(x)W^{-\frac{1}{p}}(y)\|\|_{A_{x},Q}\|_{B_{y},Q}\\
 & =\left[\frac{1}{|Q|}\int_{Q}\left(\frac{1}{|Q|}\int_{Q}\|W^{\frac{1}{p}}(x)W^{-\frac{1}{p}}(y)\|^{rp}dx\right)^{\frac{1}{rp}\frac{p+1}{p-1}}dy\right]^{\frac{p-1}{p+1}}\\
 & \leq c_{n,p}\left[\frac{1}{|Q|}\int_{Q}\left(\frac{1}{|Q|}\int_{Q}\|W(x)W^{-1}(y)\|dx\right)^{\frac{1}{p}\frac{p+1}{p-1}}dy\right]^{\frac{p-1}{p+1}}\\
 & \leq c_{n,p}\left[\frac{1}{|Q|}\int_{Q}\left([W]_{A_{1}}\right)^{\frac{1}{p}\frac{p+1}{p-1}}dy\right]^{\frac{p-1}{p+\text{1}}}=c_{n,p}[W]_{A_{1}}^{\frac{1}{p}}
\end{split}
\label{eq:A1Mat1st}
\end{equation}
 and we are done.

Now we turn our attention to \eqref{eq:A1MatComm}. We use Lemma \ref{Lem:BumpsComm}.
First we choose $\overline{B}(t)=t^{\frac{p+1}{2}}$ and $A(t)=t^{sp}$
with $s=\frac{r+1}{2}$ and $r=1+\frac{1}{2^{d+11}[W]_{A_{1,\infty}^{sc}}}$.
For that choice of $s$ we have that $\left(\frac{r}{s}\right)' {  = } 2r'$.
Notice that again $B(t)\simeq t^{\frac{p+1}{p-1}}, \ {  \|M_{\overline{B}}\|_{L^{p} \rightarrow L^p}\leq c_{}(p')^\frac{1}{p}}, $ and that $\|M_{\overline{A}}\|_{L^{p'} \rightarrow L^{p'}}\leq  c_{d}p^\frac{1}{p'} [W]_{A_{1,\infty}^{sc}} ^{\frac{1}{p'}}$
. On the other hand,
\[
\begin{split} & \|\||b(x)-\langle b\rangle_{Q}|\|W^{\frac{1}{p}}(x)W^{-\frac{1}{p}}(y)\|\|_{A_{x},Q}\|_{B_{y},Q}\\
 & =\left[\frac{1}{|Q|}\int_{Q}\left(\frac{1}{|Q|}\int_{Q}|b(x)-\langle b\rangle_{Q}|^{sp}\|W^{\frac{1}{p}}(x)W^{-\frac{1}{p}}(y)\|^{sp}dx\right)^{\frac{1}{sp}\frac{p+1}{p-1}}dy\right]^{\frac{p-1}{p+1}}\\
 & \leq\left(\frac{1}{|Q|}\int_{Q}|b(x)-\langle b\rangle_{Q}|^{sp\left(\frac{r}{s}\right)'}dx\right)^{\frac{1}{sp\left(\frac{r}{s}\right)'}}\left[\frac{1}{|Q|}\int_{Q}\left(\frac{1}{|Q|}\int_{Q}\|W^{\frac{1}{p}}(x)W^{-\frac{1}{p}}(y)\|^{rp}dx\right)^{\frac{1}{rp}\frac{p+1}{p-1}}dy\right]^{\frac{p-1}{p+1}}\\
  & \leq c_{d}sp\left(\frac{r}{s}\right)'\|b\|_{BMO}\left[\frac{1}{|Q|}\int_{Q}\left(\frac{1}{|Q|}\int_{Q}\|W^{\frac{1}{p}}(x)W^{-\frac{1}{p}}(y)\|^{rp}dx\right)^{\frac{1}{rp}\frac{p+1}{p-1}}dy\right]^{\frac{p-1}{p+1}}.
\end{split}
\]
From this point arguing as in \eqref{eq:A1Mat1st} we have that
\[
\begin{split} & \|\||b(x)-\langle b\rangle_{Q}|\|W^{\frac{1}{p}}(x)W^{-\frac{1}{p}}(y)\|\|_{A_{x},Q}\|_{B_{y},Q}\\
 & \leq c_{n,d}sp\left(\frac{r}{s}\right)'\|b\|_{BMO}[W]_{A_{1}}^{\frac{1}{p}}\\
 & \leq c_{n,d,p}\|b\|_{BMO}[W]_{A_{1,\infty}^{sc}}[W]_{A_{1}}^{\frac{1}{p}}
\end{split}
.
\]
For the other term, we choose $\overline{D}(t)=t^{\frac{p+1}{2}}$
and $C(t)=t^{rp}$ with $r=1+\frac{1}{2^{d+11}[W]_{A_{1,\infty}^{sc}}}$. Then
\[
\begin{split} & \||b(y)-\langle b\rangle_{Q}|\|\|W^{\frac{1}{p}}(x)W^{-\frac{1}{p}}(y)\|\|_{C_{x},Q}\|_{D_{y},Q}\\
 & =\left[\frac{1}{|Q|}\int_{Q}|b(y)-\langle b\rangle_{Q}|^{\frac{p+1}{p-1}}\left(\frac{1}{|Q|}\int_{Q}\|W^{\frac{1}{p}}(x)W^{-\frac{1}{p}}(y)\|^{rp}dx\right)^{\frac{1}{rp}\frac{p+1}{p-1}}dy\right]^{\frac{p-1}{p+1}}
\end{split}
\]
Arguing as above,
\[
\left(\frac{1}{|Q|}\int_{Q}\|W^{\frac{1}{p}}(x)W^{-\frac{1}{p}}(y)\|^{rp}dx\right)^{\frac{1}{rp}\frac{p+1}{p-1}}\leq c_{n}{  [W]_{A_{1}}^{\frac{1}{p} \frac{p-1}{p+1}}}.
\]
Hence
\[
\begin{split} & \left[\frac{1}{|Q|}\int_{Q}|b(y)-\langle b\rangle_{Q}|^{\frac{p+1}{p-1}}\left(\frac{1}{|Q|}\int_{Q}\|W^{\frac{1}{p}}(x)W^{-\frac{1}{p}}(y)\|^{rp}dx\right)^{\frac{1}{rp}\frac{p+1}{p-1}}dy\right]^{\frac{p-1}{p+1}}\\
 & \leq c_{n}[W]_{A_{1}}^{\frac{1}{p}}\left[\frac{1}{|Q|}\int_{Q}|b(y)-\langle b\rangle_{Q}|^{\frac{p+1}{p-1}}dy\right]^{\frac{p-1}{p+1}}\leq c_{n,d,p}\|b\|_{BMO}[W]_{A_{1}}^{\frac{1}{p}}
\end{split}
\]
Consequently gathering all the preceding estimates we obtain \eqref{eq:A1MatComm}.

\subsubsection{Proof of estimates \eqref{eq:AqMat} and \eqref{eq:AqMatComm}}

We deal first with \eqref{eq:AqMat}. We rely again upon {  Lemma \ref{Lem:Bumps}.}
We note that choosing $A(t)=t^{rp}$ with $r=1+\frac{1}{2^{d+11}[W]_{A_{q,\infty}^{sc}}}$
and $B(t)=t^{q'}$ we have that ${  \|M_{\overline{A}}\|_{L^{p'} \rightarrow L^{p'}}\leq c_{d}(r')^{\frac{1}{p'}}\leq c_{d}[W]_{A_{q,\infty}^{sc}}^{\frac{1}{p'}}}$
and $\|M_{\overline{B}}\|_{L^{p}}\leq c_{d,p,q}$. On the other hand,
notice that
\[
\begin{split} & \|\|\|W^{\frac{1}{p}}(x)W^{-\frac{1}{p}}(y)\|\|_{A_{x},Q}\|_{B_{y},Q}\\
 & =\left[\frac{1}{|Q|}\int_{Q}\left(\frac{1}{|Q|}\int_{Q}\|W^{\frac{1}{p}}(x)W^{-\frac{1}{p}}(y)\|^{rp}dx\right)^{\frac{1}{rp}q'}dy\right]^{\frac{1}{q'}}.
\end{split}
\]
By Lemma \ref{Lem:Key}
\begin{equation}
\left(\frac{1}{|Q|}\int_{Q}\|W^{\frac{1}{p}}(x)W^{-\frac{1}{p}}(y)\|^{rp}dx\right)^{\frac{1}{rp}}\leq c_{n,d,p,q}\left(\frac{1}{|Q|}\int_{Q}\|W^{\frac{1}{q}}(x)W^{-\frac{1}{q}}(y)\|^{q}dx\right)^{\frac{1}{p}}.\label{eq:RHolderAq}
\end{equation}
  Then
\[
\|\|\|W^{\frac{1}{p}}(x)W^{-\frac{1}{p}}(y)\|\|_{A_{x},Q}\|_{B_{y},Q}\leq c_{n,d,p,q}\left[\frac{1}{|Q|}\int_{Q}\left(\frac{1}{|Q|}\int_{Q}\|W^{\frac{1}{q}}(x)W^{-\frac{1}{q}}(y)\|^{q}dx\right)^{\frac{q'}{p}}dy\right]^{\frac{1}{q'}}
\]
by \eqref{eq:Aq}  we have that
\[
\|\|\|W^{\frac{1}{p}}(x)W^{-\frac{1}{p}}(y)\|\|_{A_{x},Q}\|_{B_{y},Q}\leq c_{n,d,p,q}[W]_{A_{q}}^{\frac{1}{p}}.
\]
Gathering the preceding estimates and taking into account that $r=1+\frac{1}{2^{d+11}[W]_{A_{q,\infty}^{sc}}}$
we obtain that \eqref{eq:AqMat} holds.

Let us deal now with \eqref{eq:AqMatComm}. Arguing analogously as
above, we will use Lemma \ref{Lem:BumpsComm}. First we choose $B(t)=t^{q'}$
and $A(t)=t^{sp}$ with $s=\frac{r+1}{2}$ and $r=1+\frac{1}{2^{d+11}[W]_{A_{q,\infty}^{sc}}}$.
For that choice of $s$ we have that $\left(\frac{r}{s}\right)' {   =} 2r'$.
It is also straightforward that $\|M_{\overline{B}}\|_{L^{p}}\leq c_{n,p,q}$
and that $\|M_{\overline{A}}\|_{L^{p'}}\leq c_{n,d}p\left(s'\right)^{\frac{1}{p'}}\leq c_{n,d}p[W]_{A_{q,\infty}^{sc}}^{\frac{1}{p'}}$
. Then we have that
\[
\begin{split} & \|\||b(x)-\langle b\rangle_{Q}|\|W^{\frac{1}{p}}(x)W^{-\frac{1}{p}}(y)\|\|_{A_{x},Q}\|_{B_{y},Q}\\
 & =\left[\frac{1}{|Q|}\int_{Q}\left(\frac{1}{|Q|}\int_{Q}|b(x)-\langle b\rangle_{Q}|^{sp}\|W^{\frac{1}{p}}(x)W^{-\frac{1}{p}}(y)\|^{sp}dx\right)^{\frac{1}{sp}q'}dy\right]^{\frac{1}{q'}}\\
 & \leq\left(\frac{1}{|Q|}\int_{Q}|b(x)-\langle b\rangle_{Q}|^{sp\left(\frac{r}{s}\right)'}dx\right)^{\frac{1}{sp\left(\frac{r}{s}\right)'}}\left[\frac{1}{|Q|}\int_{Q}\left(\frac{1}{|Q|}\int_{Q}\|W^{\frac{1}{p}}(x)W^{-\frac{1}{p}}(y)\|^{rp}dx\right)^{\frac{1}{rp}q'}dy\right]^{\frac{1}{q'}}\\
 & \leq c_{d}r'\|b\|_{BMO}\left[\frac{1}{|Q|}\int_{Q}\left(\frac{1}{|Q|}\int_{Q}\|W^{\frac{1}{p}}(x)W^{-\frac{1}{p}}(y)\|^{rp}dx\right)^{\frac{1}{rp}q'}dy\right]^{\frac{1}{q'}}
\end{split}
\]
Arguing as above,
\[
\|\||b(x)-\langle b\rangle_{Q}|\|W^{\frac{1}{p}}(x)W^{-\frac{1}{p}}(y)\|\|_{A_{x},Q}\|_{B_{y},Q}\leq c_{d,p,q,n}\|b\|_{BMO}[W]_{A_{q,\infty}^{sc}}\left[W\right]_{A_{q}}^{\frac{1}{p}}.
\]
For the other term we note that choosing $C(t)=t^{rp}$ with $r=1+\frac{1}{2^{d+11}[W]_{A_{q,\infty}^{sc}}}$
and $D(t)=t^{q'}$ we have that $\|M_{\overline{C}}\|_{L^{p'}}\leq c_{d}(r')^{\frac{1}{p'}}\leq c_{n,d}{ [W]_{A_{q,\infty}^{sc}}  ^{\frac{1}{p'}}}$
and $\|M_{\overline{D}}\|_{L^{p}}\leq c_{d,p,q}$. We observe that
\[
\begin{split} & \||b(y)-\langle b\rangle_{Q}|\|\|W^{\frac{1}{p}}(x)W^{-\frac{1}{p}}(y)\|\|_{C_{x},Q}\|_{D_{y},Q}\\
 & =\left[\frac{1}{|Q|}\int_{Q}|b(y)-\langle b\rangle_{Q}|^{q'}\left(\frac{1}{|Q|}\int_{Q}\|W^{\frac{1}{p}}(x)W^{-\frac{1}{p}}(y)\|^{rp}dx\right)^{\frac{1}{rp}q'}dy\right]^{\frac{1}{q'}}.
\end{split}
\]
Taking into account \eqref{eq:RHolderAq}, and arguing as above
\[
\begin{split} & \left[\frac{1}{|Q|}\int_{Q}|b(y)-\langle b\rangle_{Q}|^{q'}\left(\frac{1}{|Q|}\int_{Q}\|W^{\frac{1}{p}}(x)W^{-\frac{1}{p}}(y)\|^{rp}dx\right)^{\frac{1}{rp}q'}dy\right]^{\frac{1}{q'}}\\
 & \leq c_{n}\left[\frac{1}{|Q|}\int_{Q}|b(y)-\langle
 b\rangle_{Q}|^{q'}\left(\frac{1}{|Q|}\int_{Q}\|W^{\frac{1}{q}}(x)W^{-\frac{1}{q}}(y)\|^{q}dx\right)^{\frac{q'}{p}}dy\right]^{\frac{1}{q'}}\\
 & \leq c_{n}\left[\frac{1}{|Q|}\int_{Q}|b(y)-\langle
 b\rangle_{Q}|^{q'{ \left(\frac{p}{q}\right)'}}dy\right]^{\frac{1}{q'{ \left(\frac{p}{q}\right)'}}}\left[\frac{1}{|Q|}\int_{Q}\left(\frac{1}{|Q|}\int_{Q}\|W^{\frac{1}{q}}(x)W^{-\frac{1}{q}}(y)\|^{q}dx\right)^{\frac{q'}{q}}dy\right]^{\frac{q}{q'p}}\\
 & \leq c_{n,d,p,q}\|b\|_{BMO}\left[\frac{1}{|Q|}\int_{Q}\left(\frac{1}{|Q|}\int_{Q}\|W^{\frac{1}{q}}(x)W^{-\frac{1}{q}}(y)\|^{q}dx\right)^{\frac{q'}{q}}dy\right]^{\frac{q}{q'p}}\\
 & \leq c_{n,d,p,q}\|b\|_{BMO}\left[W\right]_{A_{q}}^{\frac{1}{p}}.
\end{split}
\]
Gathering all the choices and estimates above, a direct application
of Lemma \ref{Lem:BumpsComm} yields the desired estimate.

\subsection{Proof of the estimates for Maximal Rough Singular Integrals}

Arguing as in \cite{DiPHL}, we have that
\[
\left\Vert T_{\Omega,W,p}^{*}\vec{f}\right\Vert _{{  L^{p}(\mathbb{R}^{d})}}\lesssim\left\Vert M_{W,p}\vec{f}\right\Vert _{{  L^{p}(\mathbb{R}^{d})}}+\sup_{\||g|\|_{L^{p'}(\mathbb{R}^{d})}=1}\inf_{\varepsilon>0}\sup_{\mathcal{S}}\frac{1}{\varepsilon}\sum_{Q\in\mathcal{S}}|Q|\left\langle \left\langle W^{-\frac{1}{p}}\vec{f}\right\rangle \right\rangle _{1+\varepsilon,Q}\left\langle \left\langle W^{\frac{1}{p}}\vec{g}\right\rangle \right\rangle _{1+\varepsilon,Q}
\]
where we interpret the product in second term as the right endpoint
of the Minkowski product
\[
AB=\left\{ \left(a,b\right):a\in A,b\in B\right\}
\]
which, in the case of ${  A,B\subset\mathbb{R}^{d}}$ being convex symmetric
sets is a closed symmetric interval. The estimate for the first term
is \eqref{eq:A1MW} in the case $q=1$ and \eqref{eq:AqMW} in the
case $q>1$, so we are left with settling the estimate for the second
term. We proceed as follows.

First we notice that if $a\in\left\langle \left\langle W^{-\frac{1}{p}}\vec{f}\right\rangle \right\rangle _{1+\varepsilon,Q}$
and $b\in\left\langle \left\langle W^{\frac{1}{p}}\vec{g}\right\rangle \right\rangle _{1+\varepsilon,Q}$
then
\[
|(a,b)|\leq\frac{1}{|Q|}\int_{Q}\frac{1}{|Q|}\int_{Q}|(W^{{ -\frac{1}{p}}}(x)\vec{f}(x)\varphi_{a,Q}(x),W^{\frac{1}{p}}(y)\vec{g}(y)\psi_{b,Q}(y))|dxdy
\]
where $\varphi_{a,Q},\psi_{b,Q}\in L^{(1+\varepsilon)'}(Q)$. Since
$W^{\frac{1}{p}}$ is positive definite and symmetric a.e. we have
that
\[
\begin{split} & \frac{1}{|Q|}\int_{Q}\frac{1}{|Q|}\int_{Q}\left|\left(W^{-\frac{1}{p}}(x)\vec{f}(x)\varphi_{a,Q}(x),W^{\frac{1}{p}}(y)\vec{g}(y)\psi_{b,Q}(y)\right)\right|dxdy\\
 & =\frac{1}{|Q|}\int_{Q}\frac{1}{|Q|}\int_{Q}\left|\left(W^{\frac{1}{p}}(y)W^{-\frac{1}{p}}(x)\vec{f}(x)\varphi_{a,Q}(x),\vec{g}(y)\psi_{b,Q}(y)\right)\right|dxdy\\
 & \leq\frac{1}{|Q|}\int_{Q}\frac{1}{|Q|}\int_{Q}|W^{\frac{1}{p}}(y)W^{-\frac{1}{p}}(x)\vec{f}(x)||\varphi_{a,Q}(x)||\vec{g}(y)||\psi_{b,Q}(y)|dxdy\\
 & \leq\frac{1}{|Q|}\int_{Q}|\vec{g}(y)||\psi_{b,Q}(y)|\left(\frac{1}{|Q|}\int_{Q}|W^{\frac{1}{p}}(y)W^{-\frac{1}{p}}(x)\vec{f}(x)|^{1+\varepsilon}dx\right)^{\frac{1}{1+\varepsilon}}\left(\frac{1}{|Q|}\int_{Q}|\varphi_{a,Q}(x)|^{\left(1+\varepsilon\right)'}dx\right)^{\frac{1}{(1+\varepsilon)'}}dy\\
 & \leq\frac{1}{|Q|}\int_{Q}|\vec{g}(y)|\left(\frac{1}{|Q|}\int_{Q}|W^{\frac{1}{p}}(y)W^{-\frac{1}{p}}(x)\vec{f}(x)|^{1+\varepsilon}dx\right)^{\frac{1}{1+\varepsilon}}|\psi_{b,Q}(y)|dy\\
 & \leq\left(\frac{1}{|Q|}\int_{Q}|\vec{g}(y)|^{1+\varepsilon}\left(\frac{1}{|Q|}\int_{Q}|W^{\frac{1}{p}}(y)W^{-\frac{1}{p}}(x)\vec{f}(x)|^{1+\varepsilon}dx\right)dy\right)^{\frac{1}{1+\varepsilon}}\left(\frac{1}{|Q|}\int_{Q}|\psi_{b,Q}(y)|^{\left(1+\varepsilon\right)'}dy\right)^{\frac{1}{(1+\varepsilon)'}}\\
 & \leq\left(\frac{1}{|Q|}\int_{Q}\frac{1}{|Q|}\int_{Q}|W^{\frac{1}{p}}(y)W^{-\frac{1}{p}}(x)\vec{f}(x)|^{1+\varepsilon}|\vec{g}(y)|^{1+\varepsilon}dxdy\right)^{\frac{1}{1+\varepsilon}}
\end{split}
\]
Then
\[
\begin{split} & \frac{1}{\varepsilon}\sum_{Q\in\mathcal{S}}|Q|\left\langle \left\langle W^{-\frac{1}{p}}\vec{f}\right\rangle \right\rangle _{1+\varepsilon,Q}\left\langle \left\langle W^{\frac{1}{p}}\vec{g}\right\rangle \right\rangle _{1+\varepsilon,Q}\\
 & \lesssim\frac{1}{\varepsilon}\sum_{Q\in\mathcal{S}}|Q|\left(\frac{1}{|Q|}\int_{Q}\frac{1}{|Q|}\int_{Q}|W^{\frac{1}{p}}(y)W^{-\frac{1}{p}}(x)\vec{f}(x)|^{1+\varepsilon}|\vec{g}(y)|^{1+\varepsilon}dxdy\right)^{\frac{1}{1+\varepsilon}}.
\end{split}
\]
We are going to obtain a suitable control for this term providing
an argument analogous to the one we gave for Calderón-Zygmund operators.
Let $\tau=8\cdot2^{d+11}$,
\[
\varepsilon=\frac{1}{p\frac{p}{p-q}\tau[W]_{A_{q,\infty}^{sc}}}\qquad s=1+\frac{1}{p'\frac{p}{p-q}(\tau-2)[W]_{A_{q,\infty}^{sc}}}
\]
Then we have that
\[
\begin{split} & \left(\frac{1}{|Q|}\int_{Q}\frac{1}{|Q|}\int_{Q}|W^{\frac{1}{p}}(y)W^{-\frac{1}{p}}(x)\vec{f}(x)|^{1+\varepsilon}|\vec{g}(y)|^{1+\varepsilon}dxdy\right)^{\frac{1}{1+\varepsilon}}\\
 & \leq\left(\frac{1}{|Q|}\int_{Q}|\vec{f}(x)|^{1+\varepsilon}\frac{1}{|Q|}\int_{Q}\|W^{\frac{1}{p}}(y)W^{-\frac{1}{p}}(x)\|^{1+\varepsilon}|\vec{g}(y)|^{1+\varepsilon}dydx\right)^{\frac{1}{1+\varepsilon}}\\
 & \leq\left(\frac{1}{|Q|}\int_{Q}|\vec{f}(x)|^{1+\varepsilon}\left(\frac{1}{|Q|}\int_{Q}\|W^{\frac{1}{p}}(y)W^{-\frac{1}{p}}(x)\|^{ps(1+\varepsilon)}dy\right)^{\frac{1}{ps(1+\varepsilon)}(1+\varepsilon)}dx\right)^{\frac{1}{1+\varepsilon}}M_{(ps)'(1+\varepsilon)}(|\vec{g}|)(x).
\end{split}
\]
{ It is not hard to check that
\begin{equation}
\|M_{(ps)'(1+\varepsilon)}\|_{L^{p'}(\mathbb{R}^{d})}\lesssim\left([W]_{A_{q,\infty}^{sc}}\right)^{\frac{1}{p'}}\qquad1\leq q<p.\label{eq:MaxLpPrime}
\end{equation}
Indeed, taking into account \eqref{eq:ConstMA} it suffices to prove that
\[\frac{1}{p'-(ps)'(1+\varepsilon)}\lesssim [W]_{A_{q,\infty}^{sc}}.\]
First we note that
\[\frac{1}{p'-(ps)'(1+\varepsilon)}=\frac{1}{p}\frac{(ps-1)(p-1)}{(ps-1)-s(p-1)(1+\varepsilon)}.\]
Working on the denominator we have that
\[
\begin{split} & (ps-1)-s(p-1)(1+\varepsilon)\\
 & =(ps-1)+(-sp+s)(1+\varepsilon)=ps-1-ps-ps\varepsilon+s+s\varepsilon\\
 & =-1-ps\varepsilon+s+s\varepsilon=-1+((1-p)\varepsilon+1)\left(1+(p-1)\frac{\tau}{\tau-2}\varepsilon\right)\\
 & =-1-(p-1)^{2}\frac{\tau}{\tau-2}\varepsilon^{2}+(p-1)\frac{\tau}{\tau-2}\varepsilon-(p-1)\varepsilon+1\\
 & =-(p-1)^{2}\frac{\tau}{\tau-2}\varepsilon^{2}+(p-1)\frac{\tau}{\tau-2}\varepsilon-(p-1)\varepsilon\\
 & =(p-1)\varepsilon\left[-(p-1)\frac{\tau}{\tau-2}\varepsilon+\frac{\tau}{\tau-2}-\frac{\tau-2}{\tau-2}\right]\\
 & =\frac{(p-1)\varepsilon}{\tau-2}\left[2-(p-1)\tau\varepsilon\right]
\end{split}
\]
It is clear that $(p-1)\tau\varepsilon\leq1$. Combining this estimate with the identities above,
\[
\begin{split}\frac{1}{p'-(ps)'(1+\varepsilon)}= & \frac{1}{p}\frac{(ps-1)(p-1)}{(ps-1)-s(p-1)(1+\varepsilon)}=\frac{\tau-2}{p}\frac{(ps-1)(p-1)}{\varepsilon(p-1)\left[2-(p-1)\tau\varepsilon\right]}\\
 & =\frac{\tau-2}{\varepsilon p}\frac{(ps-1)}{\left[2-(p-1)\tau\varepsilon\right]}\leq(ps-1)\frac{\tau-2}{\varepsilon p}\lesssim[W]_{A_{q,\infty}^{sc}}
\end{split}
\]}
Now we focus on the proof of \eqref{eq:A1Rough}. In that case, by
Lemma \ref{Lem:Key}, since $(1+\varepsilon)s\leq1+\frac{1}{2^{d+11}[W]_{A_{q,\infty}^{sc}}}$,
\[
\begin{split} & \left(\frac{1}{|Q|}\int_{Q}|\vec{f}(x)|^{1+\varepsilon}\left(\frac{1}{|Q|}\int_{Q}\|W^{\frac{1}{p}}(y)W^{-\frac{1}{p}}(x)\|^{ps(1+\varepsilon)}dy\right)^{\frac{1}{ps(1+\varepsilon)}(1+\varepsilon)}dx\right)^{\frac{1}{1+\varepsilon}}M_{(ps)'(1+\varepsilon)}(|\vec{g}|)(x)\\
 & \leq\left(\frac{1}{|Q|}\int_{Q}|\vec{f}(x)|^{1+\varepsilon}\left(\frac{1}{|Q|}\int_{Q}\|W(y)W^{-1}(x)\|dy\right)^{\frac{1}{p}(1+\varepsilon)}dx\right)^{\frac{1}{1+\varepsilon}}M_{(ps)'(1+\varepsilon)}(|\vec{g}|)(x)\\
 & \leq[W]_{A_{1}}^{\frac{1}{p}}M_{1+\varepsilon}(|\vec{f}|)(x)M_{(ps)'(1+\varepsilon)}(|\vec{g}|)(x)
\end{split}
\]
This yields that
\[
\begin{split} & \frac{1}{\varepsilon}\sum_{Q\in\mathcal{S}}{  |Q|} \left(\frac{1}{|Q|}\int_{Q}\frac{1}{|Q|}\int_{Q}|W^{\frac{1}{p}}(y)W^{-\frac{1}{p}}(x)\vec{f}(x)|^{1+\varepsilon}|\vec{g}(y)|^{1+\varepsilon}dxdy\right)^{\frac{1}{1+\varepsilon}}.\\
 & \lesssim[W]_{A_{1,\infty}^{sc}} { [W]_{A_{1}}^{\frac{1}{p}}} \sum_{Q\in\mathcal{S}}|Q|M_{1+\varepsilon}(|\vec{f}|)(x)M_{(ps)'(1+\varepsilon)}(|\vec{g}|)(x)\\
 & \lesssim[W]_{A_{1,\infty}^{sc}}[W]_{A_{1}}^{\frac{1}{p}}\int_{\mathbb{R}^{d}}M_{1+\varepsilon}(|\vec{f}|)M_{(ps)'(1+\varepsilon)}(|\vec{g}|)(x)\\
 & \lesssim[W]_{A_{1,\infty}^{sc}}[W]_{A_{1}}^{\frac{1}{p}}\|M_{1+\varepsilon}\|_{L^{p}(\mathbb{R}^{d})}\|M_{(ps)'(1+\varepsilon)}\|_{L^{p'}(\mathbb{R}^{d})}{ \|\vec{f}\|_{ L^{p}(\mathbb{R}^{d};\mathbb{C}^{n})}\|\vec{g}\|_{L^{p'}(\mathbb{R}^{d};\mathbb{C}^{n})}}\\
 & \lesssim[W]_{A_{1,\infty}^{sc}}^{1+\frac{1}{p'}}[W]_{A_{1}}^{\frac{1}{p}}{ \|\vec{f}\|_{L^{p}(\mathbb{R}^{d};\mathbb{C}^{n})}\|\vec{g}\|_{L^{p'}(\mathbb{R}^{d};\mathbb{C}^{n})}}
\end{split}
\]
by \eqref{eq:MaxLpPrime} and taking into account that from \eqref{eq:ConstMA}
it follows that
\[
\|M_{1+\varepsilon}\|_{L^{p}(\mathbb{R}^{d})}\lesssim\left(p'\right)^{\frac{1}{p}}.
\]
In the case $q>1$, namely, to settle \eqref{eq:AqRough}, we argue
as follows. By Lemma \ref{Lem:Key}, since $(1+\varepsilon)s\leq1+\frac{1}{2^{d+11}[W]_{A_{q,\infty}^{sc}}}$
\[
\begin{split} & \left(\frac{1}{|Q|}\int_{Q}|\vec{f}(x)|^{1+\varepsilon}\left(\frac{1}{|Q|}\int_{Q}\|W^{\frac{1}{p}}(y)W^{-\frac{1}{p}}(x)\|^{ps(1+\varepsilon)}dy\right)^{\frac{1}{ps(1+\varepsilon)}(1+\varepsilon)}dx\right)^{\frac{1}{1+\varepsilon}}M_{(ps)'(1+\varepsilon)}(|{ \V{g}}|)(x)\\
 & \leq\left(\frac{1}{|Q|}\int_{Q}|\V{f}(x)|^{1+\varepsilon}\left(\frac{1}{|Q|}\int_{Q}\|W^{\frac{1}{q}}(y)W^{-\frac{1}{q}}(x)\|^{q}dy\right)^{\frac{1}{p}(1+\varepsilon)}dx\right)^{\frac{1}{1+\varepsilon}}M_{(ps)'(1+\varepsilon)}(|{ \V{g}}|)(x)
\end{split}
\]
{  If we call $W=V^{1-q}$, then}
\[
\begin{split} & \left(\frac{1}{|Q|}\int_{Q}|{ \V{f}}(x)|^{1+\varepsilon}\left(\frac{1}{|Q|}
\int_{Q}\|V^{-\frac{1}{q'}}(y)V^{\frac{1}{q'}}(x)\|^{q}dy\right)^{\frac{1}{p}(1+\varepsilon)}dx\right)^{\frac{1}{1+\varepsilon}}
M_{(ps)'(1+\varepsilon)}(|{ \V{g}}|)(x)\\
 & \leq\left(\left(\frac{1}{|Q|}\int_{Q}\|V^{-\frac{1}{q'}}(y)V^{\frac{1}{q'}}(x)\|^{q}dy\right)^{\frac{q'}{q}}dx\right)^{\frac{q}{q'p}}
 M_{\left(\frac{pq'}{q(1+\varepsilon)}\right)^{'}(1+\varepsilon)}(|{ \V{f}}|)(x)M_{(ps)'(1+\varepsilon)}(|{ \V{g}}|)(x)\\
  & \leq[V]_{A_{q'}}^{\frac{q-1}{p}}M_{\left(\frac{pq'}{q(1+\varepsilon)}\right)^{'}(1+\varepsilon)}(|{ \V{f}}|)(x)
  M_{(ps)'(1+\varepsilon)}(|{ \V{g}}|)(x)\\
 & \lesssim[W]_{A_{q}}^{\frac{1}{p}}M_{\left(\frac{pq'}{q(1+\varepsilon)}\right)^{'}(1+\varepsilon)}(|{ \V{f}}|)(x)
 M_{(ps)'(1+\varepsilon)}(|{ \V{g}}|)(x)
\end{split}
\]
where the last estimate follows from Proposition \ref{Prop:AqAqprime}.
Taking that estimate into account,
\[
\begin{split} & \frac{1}{\varepsilon}\sum_{Q\in\mathcal{S}}{ {|Q| }} \left(\frac{1}{|Q|}\int_{Q}\frac{1}{|Q|}\int_{Q}|W^{\frac{1}{p}}(y)W^{-\frac{1}{p}}(x){ \V{f}}(x)|^{1+\varepsilon}|{ \V{g}}(y)|^{1+\varepsilon}dxdy\right)^{\frac{1}{1+\varepsilon}}.\\
 & \lesssim[W]_{A_{q,\infty}^{sc}}[W]_{A_{q}}^{\frac{1}{p}}\sum_{Q\in\mathcal{S}}|Q|M_{\left(\frac{pq'}{q(1+\varepsilon)}\right)^{'}(1+\varepsilon)}{ (|\V{f}|)}(x)M_{(ps)'(1+\varepsilon)}{ (|\V{g}|)}(x)\\
 & \lesssim[W]_{A_{q,\infty}^{sc}}[W]_{A_{q}}^{\frac{1}{p}}\int_{\mathbb{R}^{d}}M_{\left(\frac{pq'}{q(1+\varepsilon)}\right)^{'}(1+\varepsilon)}(|{ \V{f}}|)(x)M_{(ps)'(1+\varepsilon)}(|{ \V{g}}|)dx\\
 & \lesssim[W]_{A_{q,\infty}^{sc}}[W]_{A_{q}}^{\frac{1}{p}}\left\Vert M_{\left(\frac{pq'}{q(1+\varepsilon)}\right)^{'}(1+\varepsilon)}\right\Vert _{L^{p}(\mathbb{R}^{d})}\|M_{(ps)'(1+\varepsilon)}\|_{L^{p'}(\mathbb{R}^{d})}{  \|\vec{f}\|_{L^{p}(\mathbb{R}^{d};\mathbb{C}^{n})}\|\vec{g}\|_{L^{p'}(\mathbb{R}^{d};\mathbb{C}^{n})}}\\
 & \lesssim[W]_{A_{q,\infty}^{sc}}^{1+\frac{1}{p'}}[W]_{A_{q}}^{\frac{1}{p}}{  \|\vec{f}\|_{L^{p}(\mathbb{R}^{d};\mathbb{C}^{n})}\|\vec{g}\|_{L^{p'}(\mathbb{R}^{d};\mathbb{C}^{n})}}
\end{split}
\]
by \eqref{eq:MaxLpPrime} and taking into account that from \eqref{eq:ConstMA}
it is not hard to derive that
\[
\left\Vert M_{\left(\frac{pq'}{q(1+\varepsilon)}\right)^{'}(1+\varepsilon)}\right\Vert _{L^{p}(\mathbb{R}^{d})}\lesssim\left(\frac{p}{p-q}\right)^{\frac{1}{p}}.
\]
This ends the proof of \eqref{eq:AqRough}.

\section*{Acknowledgements}

We would like to thank Francesco Di Plinio for suggesting us to address
the problem of the maximal rough singular integral.

The second author would like to express his gratitude to the Department
of Mathematics of Lund University for the hospitality shown during
his visit between September and November 2017.

\end{document}